\theoremstyle{plain}
\newtheorem{theorem}{Theorem}[section]
\newtheorem{conjecture}[theorem]{Conjecture}
\newtheorem{corollary}[theorem]{Corollary}
\newtheorem{proposition}[theorem]{Proposition}
\newtheorem{lemma}[theorem]{Lemma}
\theoremstyle{definition}
\newtheorem{definition}[theorem]{Definition}
\newtheorem{remark}[theorem]{Remark}
\newtheorem{example}[theorem]{Example}
\newtheorem{convention}[theorem]{Convention}
\numberwithin{equation}{section}
\begin{document}

\title[Enumeration of quasigroup words]{On the enumeration and asymptotic growth of free quasigroup words}
\author[J.D.H.~Smith]{Jonathan D.H.~Smith$^1$}
\author[S.G.~Wang]{Stefanie G.~Wang$^2$}
\address{$^1$ Dept. of Mathematics\\
Iowa State University\\
Ames, Iowa 50011, U.S.A.}
\address{$^1$ Dept. of Mathematics \& Statistics\\
Smith College\\
Northampton, Massachusetts 01063, U.S.A.}
\email{$^1$jdhsmith@iastate.edu\phantom{,}}
\email{$^2$stwang@smith.edu}

\keywords{quasigroup, triality, Catalan number, reduced word, golden section}

\subjclass[2010]{20N05, 08B20, 05A99}

\begin{abstract}
The paper counts the number of reduced quasigroup words of a particular length in a certain number of generators. Taking account of the relationship with the Catalan numbers, counting words in a free magma, we introduce the term peri-Catalan number for the free quasigroup word counts. The main result of the paper is an exact recursive formula for the peri-Catalan numbers, structured by the Euclidean Algorithm. 

The Euclidean Algorithm structure does not readily lend itself to standard techniques of asymptotic analysis. However, conjectures for the asymptotic behavior of the peri-Catalan numbers, substantiated by numerical data, are presented. A remarkable aspect of the observed asymptotic behavior is the so-called asymptotic irrelevance of quasigroup identities, whereby cancelation resulting from quasigroup identities has a negligible effect on the asymptotic behavior of the peri-Catalan numbers for long words in a large number of generators.  
\end{abstract}

\maketitle


\section{Introduction}

In the free magma on one generator $x$, the number of words of length $n$ (i.e., with $n$ occurrences of the letter $x$) is given by the Catalan number $C_n$ (in the natural leaf-counting indexation of \cite[Rem.~5.5]{CL}). More generally, the number of words of length $n$ in the free magma on $s$ generators is given as $s^nC_n$. Now consider the free quasigroup on $s$ generators. The word problem was solved by Evans in a notable paper which gave one of the earliest applications of confluence (the ``Diamond Lemma") in rewriting systems \cite{Evans51} \cite[\S I.3]{Evans90}. We now quantify Evans' normal form theorem by asking for the number of reduced words of length $n$, which we define as the \emph{peri-Catalan number} $P^s_n$ (\S\ref{SS:p-Cdefnd}). In the language of geometric group theory, we are interested in the growth rate of a free quasigroup (compare \cite{Grigorchuk}).

In our terminology, the Greek qualifier $\pi\varepsilon\rho\iota$ has the sense of ``around" (as in ``perimeter"), in reference to the fact that the free quasigroup sits around the free magma. The close relationship between the Catalan numbers and the peri-Catalan numbers is reinforced by our conjectured formula
\begin{equation}\label{E:asnPtotc}
\lim_{n\to\infty}
\frac{\log P^s_n}{\log C_n+n\log 3s-\log3}
\simeq 1-\frac{0.019\dots}{s-\log\big((1+\sqrt5)/2\big)}
\end{equation}
for the asymptotic behavior of the peri-Catalan numbers. We do not have an interpretation for the numerical constant $0.019\dots$ appearing in the numerator of the fraction on the right hand side of \eqref{E:asnPtotc}.

The main result of the paper is the recursive formula \eqref{E:FormuPsn} for the peri-Catalan numbers $P^s_n$. The formula is structured around the Euclidean Algorithm. More specifically, for each partition $n=(n-k)+k$ of length two, with $n-k\ge k$, and for each call to the Division Algorithm within the Euclidean Algorithm for computing $\gcd(n,k)$, there is a corresponding summand in the recursion formula. The appearance of the golden ratio in the conjectured formula \eqref{E:asnPtotc} may be related to the fact that the Fibonacci numbers provide the worst cases for runs of the Euclidean Algorithm \cite[\S4.5.3, Th,~F]{Knuth2}.

Section~\ref{S:background} gives the background definitions for equationally-defined quasigroups, as algebras $(Q,\cdot,/,\backslash)$ with basic operations of multiplication $\cdot$, right division $/$, and left division $\backslash$ that satisfy identities such as $x\cdot(x\backslash y)=y$. This identity corresponds to one of the Latin square properties for the body of the multiplication table of a finite quasigroup, specifically that each element $y$ appears in the row labeled by $x$, namely in the column labeled by $x\backslash y$ \cite[Defn.~11.6(a)]{ITAA}. Thus $x\cdot(x\backslash y)$ is not reduced as a quasigroup word of length $3$, and will not contribute to the peri-Catalan number $P^2_3$.

Section~\ref{S:triality} recalls the triality symmetry ($S_3$-action) of the language of quasigroups. The recursion formula for $P^s_n$ is based on an inductive process creating a quasigroup word of length $n$ by applying one of the three basic operations to a pair of reduced words whose lengths sum to $n$ (\S\ref{SS:IndProces}). The triality notation of Section~\ref{S:triality} is used in \S\ref{S:rootcancellations} to give an efficient and non-repetitive treatment of the cancelations that take place in the inductive process. This application of triality is comparable to its use in the streamlining of the proof of Evans' normal form theorem \cite{Sm131}. The Euclidean Algorithm notation (generally following \cite[\S1.6]{ITAA}) is introduced in \S\ref{SS:EucliAlg}. The double inductive proof of the main Theorem~\ref{T:ExactPsn} is then given in \S\ref{SS:RecProPC}.

The recursive formula that is provided by Theorem~\ref{T:ExactPsn} does not appear to be readily amenable to the standard methods of asymptotic analysis. Nevertheless, the concluding Section~\ref{S:Asympttc} provides some conjectured results based on an alternative approach. In particular, Conjecture~\ref{Cj:DepGeNum} suggests the asymptotic behavior \eqref{E:asnPtotc} for all generator counts $s$. The so-called \textit{asymptotic irrelevance of quasigroup identities} (Conjecture~\ref{Cj:AsIrQpId}) asserts that cancelation resulting from quasigroup identities has a negligible effect on the asymptotic behavior of the peri-Catalan numbers for long words in a large number of generators.

We use algebraic notation (with functions following, or as superfixes of, their arguments) as the default. Note that $S_r$ denotes the symmetric group on $r$ letters.

\section{Background}\label{S:background}

\subsection{Magmas, rooted binary trees, and Catalan numbers}\label{SS:MRBTCaNo}

A \emph{magma} $(M,\cdot)$ is a set equipped with a single binary operation $M\times M\to M;(m_1,m_2)\mapsto m_1\cdot m_2$. Elements of free magmas are described as (\emph{magma}) \emph{words}. Words in the free magma on a singleton alphabet $\{a\}$ are repeated concatenations of the generator $a$ under the magma operation. Each word $w$ is determined by a \emph{rooted binary tree} $B_w$, defined recursively as follows:
\begin{enumerate}
\item[$(\mathrm a)$]
The tree $B_{a}$ consists of a single node (which is both root and leaf);
\item[$(\mathrm b)$]
For words $u,v$, the tree $B_{u\cdot v}$ has a root with $B_u$ as a left child, and $B_v$ as a right child.
\end{enumerate}

If $B_w$ has $n$ leaves, then the magma word $w$ has \emph{length} $n$. Although varying conventions are encountered, we define the $n$-th \emph{Catalan number} $C_n$ as the number of magma words of length $n$ in the single generator $a$ (compare \cite[\S5]{CL}). Then the free magma in $s$ generators has $s^nC_n$ words of length $n$.

A magma $(Q,\cdot)$ is a \emph{combinatorial quasigroup} if knowing any two arguments in the equation $x\cdot y=z$ uniquely specifies the third argument for all $x, y, z\in Q$.

\subsection{Quasigroups}\label{SS:quasigps}

An \emph{equational quasigroup} $(Q, \cdot,/,\backslash)$ is a set equipped with three \emph{basic} binary operations, multiplication $\cdot$, right division /, and left division $\backslash$ such that for all $x, y\in Q$, the following identities are satisfied:
\begin{equation}\label{E:QgpIdens}
 \begin{array}{lll}
( \mbox{SL} ) & x\cdot(x\backslash y) = y \, ;\qquad
( \mbox{SR} ) & y = (y/x)\cdot x \, ; \\
( \mbox{IL} ) & x\backslash (x\cdot y) = y \, ;\qquad
( \mbox{IR} ) & y = (y\cdot x)/x \, .
 \end{array}
\end{equation}
Note that (IL), (IR) give the respective injectivity of the left multiplication
$$
L(x)\colon Q\to Q;y\mapsto xy
$$
and right multiplication
$$
R(x)\colon Q\to Q;y\mapsto yx
$$
for $x\in Q$, while (SL), (SR) give their surjectivity. Thus an equational quasigroup $(Q,\cdot,/,\backslash)$ yields a combinatorial quasigroup $(Q,\cdot)$. Conversely, a combinatorial quasigroup $(Q,\cdot)$ yields an equational quasigroup $(Q,\cdot,/,\backslash)$ with $x/y=xR(y)^{-1}$ and $x\backslash y=yL(x)^{-1}$.

In an equational quasigroup $(Q,\cdot,/,\backslash)$, the three equations
\begin{equation}\label{E:firsthre}
x_1\cdot x_2=x_3\, ,\qquad x_3/x_2=x_1\, ,\qquad x_1\backslash x_3=x_2
\end{equation}
involving the basic operations are equivalent. Introducing the \emph{opposite} operations
$$
x\circ y=y\cdot x\, ,\qquad x/\negthinspace/y=y/x\, ,\qquad x\backslash\negthinspace\backslash y=y\backslash x
$$
on $Q$, the equations \eqref{E:firsthre} are further equivalent to the equations
$$
x_2\circ x_1=x_3\, ,\qquad x_2/\negthinspace/x_3=x_1\, ,\qquad x_3\backslash\negthinspace\backslash x_1=x_2\, .
$$
Thus each of the basic and opposite operations
\begin{equation}\label{E:conjugac}
(Q,\cdot),\quad(Q,/),\quad(Q,\backslash),
\quad(Q,\circ),\quad(Q,/\negthinspace/),\quad(Q,\backslash\negthinspace\backslash)
\end{equation}
forms a (combinatorial) quasigroup. In particular, note that the identities (IR) in $(Q,\backslash)$ and (IL) in (Q,/) yield the respective identities
$$
 \begin{array}{ll}
( \mbox{DL} ) & x/(y\backslash x)=y \, ,\index{DL@(DL)}\\
( \mbox{DR} ) & y=(x/y)\backslash x \index{DR@(DR)}
 \end{array}
$$
in the basic quasigroup divisions. The six quasigroups \eqref{E:conjugac} are known as the \emph{conjugates}, ``parastrophes'' \cite{Sade57a} or ``derived quasigroups'' \cite{James} of $(Q,\cdot)$.

\subsection{Basic words and parsing trees}\label{SS:BasParTr}

In the free quasigroup on an alphabet
$$
\{a_1, a_2, \dots, a_s\}
$$
of $s$ letters, (\emph{basic}) \emph{quasigroup words} are repeated concatenations of the generators under the three basic quasigroup operations $\cdot,/,\backslash$. Each basic quasigroup word $w$ is in one-one correspondence with a \emph{basic} \emph{parsing tree} $T_w$, defined recursively as follows:
\begin{enumerate}
\item[$(\mathrm a)$]
For $1\le i\le s$, the tree $T_{a_i}$ is a single vertex annotated by $a_i$;
\item[$(\mathrm b)$]
For basic words $u,v$, the tree $T_{u\cdot v}$ has:
\begin{enumerate}
\item[(i)]
a root annotated by the multiplication,
\item[(ii)]
$T_u$ as a left child, and $T_v$ as a right child;
\end{enumerate}
\item[$(\mathrm c)$]
For basic words $u,v$, the tree $T_{u/v}$ has:
\begin{enumerate}
\item[(i)]
a root annotated by the right division,
\item[(ii)]
$T_u$ as a left child, and $T_v$ as a right child;
\end{enumerate}
\item[$(\mathrm d)$]
For basic words $u,v$, the tree $T_{u\backslash v}$ has:
\begin{enumerate}
\item[(i)]
a root annotated by the left division,
\item[(ii)]
$T_u$ as a left child, and $T_v$ as a right child.
\end{enumerate}
\end{enumerate}

\begin{remark}\label{R:WdLength}
(a) A basic parsing tree stripped of its annotation yields a binary rooted tree.
\vskip 1.5mm
\noindent
(b) If a basic parsing tree $T_u$ has $n$ leaves, then the basic quasigroup word $u$ has \emph{length} $n$, involving $n-1$ basic operations determined by the annotations of the nodes of $T_u$.
\end{remark}

\section{Triality}\label{S:triality}

\subsection{Triality action of the symmetric group $S_3$}

The symmetric group $S_3$ on the $3$-element set $\{1,2,3\}$ is presented as
$$
\big\langle \sigma,\tau \,\big\vert\, \sigma^2=\tau^2=(\sigma\tau)^3=1 \big\rangle
$$
writing $\sigma$ and $\tau$ for the respective transpositions $(12)$ and $(23)$. The Cayley diagram of the presentation is
\begin{equation}\label{E:cayls3st}
\begin{matrix}
1  &\Longleftrightarrow   &\tau   &\longleftrightarrow     &\tau\sigma  \\
\updownarrow   &            &       &                   &\Updownarrow   \\
\sigma &\Longleftrightarrow   &\sigma\tau  &\longleftrightarrow     &\sigma\tau\sigma
\end{matrix}
\end{equation}
with $\leftrightarrow$ for right multiplication by $\sigma$ and $\Leftrightarrow$ for right multiplication by $\tau$.

Now consider the full set
\begin{equation}\label{E:conjugat}
\{\cdot, \backslash, /\negthinspace/, /, \backslash\negthinspace\backslash, \circ\}
\end{equation}
of all quasigroup operations, both basic and opposite. For current purposes, it will be convenient to use postfix notation for binary operations, setting $x\cdot y=xy\,\mu$ and rewriting the first equation of \eqref{E:firsthre} in the form
\begin{equation}\label{E:x1x2mux3}
x_1x_2\,\mu =x_3 \, .
\end{equation}
The full set \eqref{E:conjugat} is construed as the homogeneous space
\begin{equation}\label{E:homospop}
\mu^{S_3}=\{\mu^g\mid g\in S_3\}
\end{equation}
for a regular right permutation action of the symmetric group $S_3$, such that \eqref{E:x1x2mux3} is equivalent to
\begin{equation}\label{E:x1gx2gmg}
x_{1g}x_{2g}\mu^g=x_{3g}
\end{equation}
for each $g$ in $S_3$. This action is known as \emph{triality}.\footnote{Triality in this sense is not to be confused with the distinct but related notion ultimately arising from the action of $S_3$ on the Dynkin diagram $D_4$ (compare, say, \cite{HaNa}).} The six binary operations, in their positions corresponding to the Cayley diagram \eqref{E:cayls3st}, are displayed in Figure~\ref{F:trialops}.
\vskip .2in
\begin{figure}[bht]
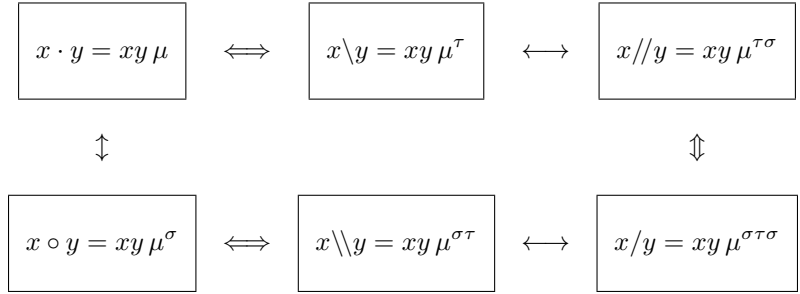

$$
\begin{matrix}
\boxed{\phantom{\Bigg|}x\cdot y=xy\,\mu\phantom{\Bigg|}}
&\Longleftrightarrow
&\boxed{\phantom{\Bigg|}x\backslash y=xy\,\mu^{\tau}\phantom{\Bigg|}}
&\longleftrightarrow
&\boxed{\phantom{\Bigg|}x/\negthinspace/y=xy\,\mu^{\tau\sigma}\phantom{\Bigg|}}
\\
&            &       &                   & \\
\updownarrow   &            &       &                   &\Updownarrow   \\
&            &       &                   & \\
\boxed{\phantom{\Bigg|}x\circ y=xy\,\mu^{\sigma}\phantom{\Bigg|}}
&\Longleftrightarrow
&\boxed{\phantom{\Bigg|}x\backslash\negthinspace\backslash y=xy\,\mu^{\sigma\tau}\phantom{\Bigg|}}
&\longleftrightarrow
&\boxed{\phantom{\Bigg|}x/y=xy\,\mu^{\sigma\tau\sigma}\phantom{\Bigg|}}
\end{matrix}
$$
\vskip .3in
\caption{Symmetry of the quasigroup operations.
}\label{F:trialops}
\end{figure}
In the figure, the opposite of each operation $\mu^g$ is given by $\mu^{\sigma g}$, so passage to the opposite operation corresponds to left multiplication by the transposition $\sigma$ in the symmetric group $S_3$. The three pairs of opposite operations lie in the respective columns of Figure~\ref{F:trialops}.

\begin{remark}
A deeper interpretation of Figure~\ref{F:trialops}, and the relationship between the basic and the opposite quasigroup operations, is provided by the \emph{orthant structure} discussed in \cite[Ex.~12.1]{Sm153}. The basic operations constitute one set of representatives for the three doubleton orthants $\Omega^{+,+},\Omega^{-,+},\Omega^{+,-}$, while the opposite operations are the three remaining elements of the respective orthants.
\end{remark}

\subsection{Invertible elements of the monoid of binary words}

Just as the left multiplication by $\sigma$ in the symmetric group $S_3$ has a simple interpretation, so does the left multiplication by $\tau$. Let $M$ be the complete set of all derived binary operations on a quasigroup. Together, these operations constitute the free algebra on two generators $x$, $y$ in the variety $\mathbf{Q}$ of quasigroups. A multiplication $*$ is defined on $M$ by
\begin{equation}\label{E:mltonbin}
xy(\alpha*\beta)=x\,xy\alpha\,\beta\, .
\end{equation}
The right projection $xy\epsilon=y$ also furnishes a binary operation $\epsilon$.

\begin{lemma}
The set $M$ of all derived binary quasigroup operations forms a monoid $(M,*,\epsilon)$ under the multiplication \eqref{E:mltonbin}, with identity element $\epsilon$.
\end{lemma}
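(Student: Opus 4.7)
The plan is to verify the three monoid axioms (closure, identity, associativity) by direct computation using the defining formula $xy(\alpha*\beta)=x\,xy\alpha\,\beta$ and the right-projection identity $xy\epsilon=y$. Throughout, I will treat elements of $M$ as elements of the free quasigroup on the two generators $x,y$, so that equality of derived operations is equality in this free algebra, and I will use the fact that any identity in $x,y$ remains valid after substituting derived terms for $y$.

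First I would handle closure: if $\alpha,\beta\in M$, then the expression $x\,xy\alpha\,\beta$ is obtained from $x$ and $y$ by iterated application of derived binary operations, hence represents a derived binary operation, so $\alpha*\beta\in M$. Next I would verify the identity laws. Using $xy\epsilon=y$, a direct computation gives
\[
xy(\alpha*\epsilon)=x\,xy\alpha\,\epsilon=xy\alpha
\qquad\text{and}\qquad
xy(\epsilon*\alpha)=x\,xy\epsilon\,\alpha=xy\alpha,
\]
so $\alpha*\epsilon=\alpha=\epsilon*\alpha$ in $M$.

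For associativity, the idea is to unfold both $(\alpha*\beta)*\gamma$ and $\alpha*(\beta*\gamma)$ using the defining formula twice, and observe that both collapse to the same expression. Explicitly,
\[
xy\bigl((\alpha*\beta)*\gamma\bigr)=x\,xy(\alpha*\beta)\,\gamma
=x\,(x\,xy\alpha\,\beta)\,\gamma,
\]
while on the other side, substituting $xy\alpha$ for $y$ in the defining identity for $\beta*\gamma$ yields
\[
xy\bigl(\alpha*(\beta*\gamma)\bigr)=x\,xy\alpha\,(\beta*\gamma)
=x\,(x\,xy\alpha\,\beta)\,\gamma,
\]
and the two right-hand sides coincide.

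The only delicate point, and hence the main thing to be careful about rather than a genuine obstacle, is the second computation: one must justify that the universal identity $xy(\beta*\gamma)=x\,xy\beta\,\gamma$ may be re-instantiated with the derived term $xy\alpha$ in place of $y$. This is exactly the freeness of $M$ on the generators $x,y$ in the variety $\mathbf{Q}$, which permits substitution of any derived term for a generator while preserving identities. Once this substitution principle is invoked, the three monoid axioms follow from the short computations above, completing the proof.
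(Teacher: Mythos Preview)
Your proposal is correct and follows essentially the same approach as the paper: both verify the identity laws via $xy(\alpha*\epsilon)=x\,xy\alpha\,\epsilon=xy\alpha$ and $xy(\epsilon*\alpha)=x\,xy\epsilon\,\alpha=xy\alpha$, and both establish associativity by unfolding each side to the common expression $x\,x\,xy\alpha\,\beta\,\gamma$. Your added remarks on closure and on the substitution principle are sound but do not alter the argument.
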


\begin{proof}
Observe that
$$
xy(\alpha*\epsilon)=x\,xy\alpha\,\epsilon=xy\alpha
\quad
\mbox{ and }
\quad
xy(\epsilon*\alpha)=x\,xy\epsilon\,\alpha=xy\alpha
$$
for $\alpha$ in $M$, so $\epsilon$ is an identity element. Consider $\alpha$, $\beta$, $\gamma$ in $M$. Then
\begin{align*}
xy\big((\alpha*\beta)*\gamma\big)&=x\,xy(\alpha*\beta)\,\gamma
\\
&
=xxxy\alpha\beta\gamma
\\
&
=x\,xy\alpha(\beta*\gamma)=xy\big(\alpha*(\beta*\gamma)\big) \, ,
\end{align*}
confirming the associativity of the multiplication \eqref{E:mltonbin}.
\end{proof}

The significance of the left multiplication by $\tau$ then follows.

\begin{proposition}\label{T:hyperqgp}
For each element $g$ of $S_3$, the binary operation $\mu^g$ is a unit of the monoid $M$, with inverse $\mu^{\tau g}$.
\end{proposition}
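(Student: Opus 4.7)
The plan is to unfold the definition \eqref{E:mltonbin} of the monoid multiplication, observe that the two required equalities reduce to standard quasigroup identities, and use the correspondence between conjugates and the $S_3$-action to handle all six cases uniformly.

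Unfolding \eqref{E:mltonbin}, the equality $\mu^g * \mu^{\tau g} = \epsilon$ is the assertion
\begin{equation*}
x\bigl(xy\,\mu^g\bigr)\mu^{\tau g} = y
\end{equation*}
on $Q$, and $\mu^{\tau g} * \mu^g = \epsilon$ is the companion statement with $g$ and $\tau g$ interchanged. The conceptual backbone of the proof is the claim that for each $g \in S_3$, the conjugate $\mu^g$ is a quasigroup multiplication on $Q$ (as already recorded in \eqref{E:conjugac}), and its corresponding left division is precisely $\mu^{\tau g}$. Granting this identification, the two displays become exactly the (IL) and (SL) identities for the equational quasigroup $(Q,\mu^g)$ with its associated left division, both of which hold by definition.

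To justify the identification of $\mu^{\tau g}$ with the left division of $\mu^g$, I would check each of the six cases, using the braid relation $(\sigma\tau)^3 = 1$ (so that $\tau\sigma\tau = \sigma\tau\sigma$) together with the translation table of Figure~\ref{F:trialops}. The resulting identities exhaust the list (SL), (IL), (SR), (IR), (DL), (DR) given in \eqref{E:QgpIdens} and in the discussion following \eqref{E:conjugac}. For instance, when $g = \sigma\tau$, one has $\mu^g = \backslash\negthinspace\backslash$ and $\mu^{\tau g} = \mu^{\sigma\tau\sigma} = /$, and the two displayed identities become $x/(y\backslash x) = y$ and $(x/y)\backslash x = y$, which are (DL) and (DR) respectively; the other cases $g \in \{1,\tau,\sigma,\tau\sigma,\sigma\tau\sigma\}$ are entirely analogous and draw on (SL)/(IL), (IL)/(SL), (IR)/(SR), (SR)/(IR), and (DR)/(DL).

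The main obstacle is bookkeeping rather than substance: one must keep the postfix right action of $S_3$ on $\mu$ carefully aligned with the standard infix names of the six basic and opposite quasigroup operations, and confirm in each case which of the six quasigroup identities is being invoked. Once that alignment is fixed, no further computation is needed, and the conjugate of an equational quasigroup being itself an equational quasigroup makes the case check mechanical.
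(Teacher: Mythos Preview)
Your proposal is correct and follows essentially the same route as the paper: both reduce the claim to the six quasigroup identities (SL), (IL), (SR), (IR), (DL), (DR) by unfolding \eqref{E:mltonbin} and checking the cases $g\in S_3$ one pair at a time. Your additional conceptual gloss---that $\mu^{\tau g}$ is the left division for the conjugate quasigroup $(Q,\mu^g)$---is a pleasant way to organize the case check, but it amounts to the same verification the paper carries out explicitly.
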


\begin{proof}
The (IL) identity $x\backslash(x\cdot y)=y$ becomes $x\,xy\mu\,\mu^{\tau}=y$ or $\mu*\mu^{\tau}=\epsilon$. Similarly the (SL) identity $x\cdot(x\backslash y)=y$ becomes $x\,xy\mu^{\tau}\,\mu=x$ or $\mu^{\tau}*\mu=\epsilon$. This means that $\mu$ and $\mu^{\tau}$ are mutual inverses.

The (IR) identity $(y\cdot x)/x=y$ is $x/\negthinspace/(x\circ y)=y$. This becomes $x\,xy\mu^{\sigma}\,\mu^{\tau\sigma}=y$ or $\mu^{\sigma}*\mu^{\tau\sigma}=\epsilon$. Similarly (SR), namely $(y/ x)\cdot x=y$, may be written as $x\circ(x/\negthinspace/ y)=y$. This becomes $x\,xy\mu^{\tau\sigma}\,\mu^{\sigma}=y$ or $\mu^{\tau\sigma}*\mu^{\sigma}=\epsilon$. Thus $\mu^{\sigma}$ and $\mu^{\tau\sigma}$ are mutual inverses.

The (DR) identity $(x/y)\backslash x=y$ is $x\backslash\negthinspace\backslash (x/y)=y$, which becomes $x\,xy\mu^{\tau\sigma\tau}\,\mu^{\sigma\tau}=y$ or $\mu^{\tau\sigma\tau}*\mu^{\sigma\tau}=\epsilon$. Finally, the (DL) identity $x/(y\backslash x)=y$ is $x/(x\backslash\negthinspace\backslash y)=y$, which becomes $x\,xy\mu^{\sigma\tau}\,\mu^{\tau\sigma\tau}=y$ or $\mu^{\sigma\tau}*\mu^{\tau\sigma\tau}=\epsilon$. Thus $\mu^{\sigma\tau}$ and $\mu^{\tau\sigma\tau}$ are mutual inverses.
\end{proof}

\begin{corollary}\label{C:hyperqgp}
The six quasigroup identities $(\mathrm{SL}), (\mathrm{IL}), (\mathrm{SR}), (\mathrm{IR}), (\mathrm{DL}), (\mathrm{DR})$ of \S\ref{SS:quasigps} all take the form
\begin{equation}\label{E:hypercan}
x\,xy\mu^{\tau g}\,\mu^{g}=y
\end{equation}
for an element $g$ of $S_3$.
\end{corollary}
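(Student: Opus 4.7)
The plan is to read off the corollary directly from Proposition~\ref{T:hyperqgp} by observing that the equation $x\,xy\mu^{\tau g}\,\mu^{g}=y$ is just the statement $\mu^{\tau g}*\mu^{g}=\epsilon$ in the monoid $(M,*,\epsilon)$, spelled out via the definition \eqref{E:mltonbin} of the multiplication. So I would begin by noting this translation: the claimed identity form is literally the assertion that $\mu^{g}$ has $\mu^{\tau g}$ as a left $*$-inverse, which is exactly the content of Proposition~\ref{T:hyperqgp}.

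Next I would run through the six elements of $S_{3}$, using the presentation $\{1,\sigma,\tau,\sigma\tau,\tau\sigma,\sigma\tau\sigma\}$, and verify that substituting each in turn into \eqref{E:hypercan} recovers precisely one of the six quasigroup identities. Concretely, for $g=1$ one recovers (SL); the choice $g=\tau$ gives (IL); $g=\sigma$ gives (SR); $g=\tau\sigma$ gives (IR); $g=\sigma\tau$ gives (DR); and $g=\sigma\tau\sigma=\tau\sigma\tau$ gives (DL). These are exactly the six inverse relations established, one by one, in the proof of Proposition~\ref{T:hyperqgp} (read in the order $\mu^{\tau g}*\mu^{g}=\epsilon$ rather than $\mu^{g}*\mu^{\tau g}=\epsilon$).

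Since Proposition~\ref{T:hyperqgp} already exhibits all six two-sided inverse pairs in $M$, the substance of the work is done; the corollary only packages those relations in the uniform shape \eqref{E:hypercan}. Thus the proof amounts to the observation plus a short table matching $g\in S_{3}$ to the corresponding identity label. The only mild pitfall is keeping the triality convention $\mu^{gh}=(\mu^{g})^{h}$ straight, so that $\mu^{\tau g}$ is read as the image of $\mu^{g}$ under left multiplication by $\tau$ — this is the same convention used throughout Section~\ref{S:triality}, and its consistent application is all that is needed to complete the matching cleanly.
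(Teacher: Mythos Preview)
Your proposal is correct and matches the paper's treatment: the paper states Corollary~\ref{C:hyperqgp} without a separate proof, since it is an immediate repackaging of the six inverse relations $\mu^{\tau g}*\mu^{g}=\epsilon$ already exhibited one by one in the proof of Proposition~\ref{T:hyperqgp}. Your explicit matching of each $g\in S_3$ to the corresponding identity label is accurate and simply spells out what the paper leaves implicit.
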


\subsection{Full words and parsing trees}

As described in \S\ref{SS:BasParTr}, basic quasigroup words in the free quasigroup on an alphabet
\begin{equation}\label{E:Alph4Ful}
\{a_1, a_2, \dots, a_s\}
\end{equation}
of $s$ generators are repeated concatenations of the letters under the three basic quasigroup operations $\cdot,/,\backslash$, and each word $w$ is in one-one correspondence with a \emph{basic} \emph{parsing tree} $T_w$. It is sometimes convenient to consider an extended notion of quasigroup words and parsing trees. Thus \emph{full quasigroup words} on the alphabet \eqref{E:Alph4Ful} are repeated concatenations of the generators under the full set \eqref{E:conjugat} of all quasigroup operations, both basic and opposite. Full quasigroup words are in one-one correspondence with \emph{full} \emph{parsing trees}, which are defined recursively as follows:
\begin{enumerate}
\item[$(\mathrm a)$]
For $1\le i\le s$, the full parsing tree $F_{a_i}$ is a single vertex annotated by $a_i$;
\item[$(\mathrm b)$]
For full parsing trees $F_u,F_v$ and a basic or opposite operation $\mu^g$ from the set \eqref{E:homospop}, the tree $F_{uv\,\mu^g}$ has:
\begin{enumerate}
\item[(i)]
a base annotated by $\mu^g$, along with
\item[(ii)]
$F_u$ as a left child, and $F_v$ as a right child.
\end{enumerate}
\end{enumerate}
The basic parsing trees are precisely the full parsing trees having no node that is annotated by an opposite operation $\mu^{\sigma}$, $\mu^{\tau\sigma}$, or $\mu^{\sigma\tau}$ (compare Figure~\ref{F:trialops}).

\subsection{Nodal equivalence}

A basic quasigroup word $u$ of length $n$ determines a unique basic parsing tree $T_u$ with $n-1$ nodes. The basic parsing tree represents a so-called \emph{nodal equivalence class} $\mathbf F_u$ of $2^{n-1}$ full parsing trees, sustaining a regular action of a permutation group $(S_2)^{n-1}$ known as the \emph{nodal group} of the basic quasigroup word $u$. Each $S_2$-factor of this group, indexed by a node of $T_u$, is known as the \emph{nodal subgroup} for that node. At a given node of a full parsing tree with annotating operation $\mu^g$, the non-trivial permutation of the nodal subgroup switches the two children of the node, and changes the node's annotation to $\mu^{\sigma g}$. It fixes the remainder of the tree.

The action of the nodal group of a basic quasigroup word on the nodal equivalence class $\mathbf F_u$ of full parsing trees extends to an action on the set of corresponding full quasigroup words.

\begin{example}
Consider the basic quasigroup word $(a\cdot b)/c$ in the alphabet $\{a,b,c\}$. It determines the nodal equivalence class
$$
\{
F_{ab\,\mu\,c\,\mu^{\sigma\tau\sigma}},
F_{ba\,\mu^\sigma\,c\,\mu^{\sigma\tau\sigma}},
F_{c\,ab\,\mu\,\mu^{\tau\sigma}},
F_{c\,ba\,\mu^\sigma\,\mu^{\tau\sigma}}
\}
$$
of full parsing trees, represented by the basic parsing tree $T_{(a\cdot b)/c}=F_{ab\,\mu\,c\,\mu^{\sigma\tau\sigma}}$. Similarly, the basic quasigroup word $(a\cdot b)/c$ determines the nodal equivalence class
\begin{equation}\label{E:flwdsabc}
\{
ab\,\mu\,c\,\mu^{\sigma\tau\sigma},
ba\,\mu^\sigma\,c\,\mu^{\sigma\tau\sigma},
c\,ab\,\mu\,\mu^{\tau\sigma},
c\,ba\,\mu^\sigma\,\mu^{\tau\sigma}
\}
\end{equation}
of full quasigroup words.

The regular action of the nodal group $(S_2)^2$ of the basic word $(a\cdot b)/c$ on the set \eqref{E:flwdsabc} is displayed in the Cayley diagram
$$
\xymatrix{
ab\,\mu\,c\,\mu^{\sigma\tau\sigma}
\ar@{-}[r]
\ar@{=}[d]
&
ba\,\mu^\sigma\,c\,\mu^{\sigma\tau\sigma}
\ar@{=}[d]
\\
c\,ab\,\mu\,\mu^{\tau\sigma}
\ar@{-}[r]
&
c\,ba\,\mu^\sigma\,\mu^{\tau\sigma}
}
$$
where the (involutive) action of the nodal subgroup of the internal node $\cdot$ is given by single lines, and the (involutive) action of the nodal subgroup of the root $/$ is given by double lines.
\end{example}

Each basic quasigroup word of length $n$ is nodally equivalent to any member of a set of $2^{n-1}$ full quasigroup words. Conversely, each full quasigroup word is nodally equivalent to a unique basic quasigroup word, obtained by using nodal equivalence to replace any one of the opposite operations $\mu^{\sigma}$, $\mu^{\tau\sigma}$, or $\mu^{\sigma\tau}$ by the respective basic operation $\mu$, $\mu^{\sigma\tau\sigma}$, or $\mu^{\tau}$ (compare Figure~\ref{F:trialops}).

\begin{convention}\label{CV:BaseFull}
In later sections of the paper, the generic terms \emph{quasigroup word} and \emph{parsing tree} may refer to a basic quasigroup word or basic parsing tree, or to one of its nodally equivalent full quasigroup words or full parsing trees.
\end{convention}

\subsection{Reduced words and auxiliary bivariates}

The following terminology is motivated by Evans' observation that the Diamond Lemma holds for quasigroup words \cite{Evans51} \cite[\S I.3]{Evans90}.

\begin{definition}
(a)
A (basic or full) quasigroup word is \emph{reduced} if it will not reduce further via the quasigroup identities.
\vskip 1.5mm
\noindent
(b)
A (basic or full) parsing tree representing a quasigroup word is \emph{reduced} if its corresponding quasigroup word is reduced.
\end{definition}

\begin{remark}
Note that a basic quasigroup word or parsing tree is reduced if and only if any of its nodally equivalent full quasigroup words or parsing trees is reduced.
\end{remark}

\begin{definition}\label{D:AuxBivar}
Let $s$, $a$ and $b$ be positive integers. The \emph{auxiliary bivariate} $m^s(a,b)$ denotes the number of $(a+b)$-leaf parsing trees representing reduced quasigroup words in $s$ arguments, with an $a$-leaf basic parsing tree on the left branch, a $b$-leaf basic parsing tree on the right branch, and a given (basic or opposite) quasigroup operation at the root vertex.
\end{definition}

\begin{remark}\label{R:AuxBivar}
(a)
The triality symmetry of the language of quasigroups (Figure~\ref{F:trialops}) ensures that the auxiliary bivariate $m^s(a,b)$ is invariant under any change of the choice of quasigroup operation at the root vertex of an $(a+b)$-leaf parsing tree of the type considered in Definition~\ref{D:AuxBivar}.
\vskip 1.5mm
\noindent
(b)
In particular, $m^s(a,b)=m^s(b,a)$, since the left hand side counting certain trees with $\mu^g$ at the root corresponds to the right hand side counting certain trees with the opposite operation $\mu^{\sigma g}$ at the root.
\vskip 1.5mm
\noindent
(c)
By convention, whenever one of the arguments $s,a,b$ of an auxiliary bivariate is nonpositive, the output of the auxiliary bivariate is zero.
\end{remark}

\section{Counting quasigroup words}

\subsection{Peri-Catalan Numbers}\label{SS:p-Cdefnd}

\begin{definition}
Let $n$ and $s$ be natural numbers. The $n$-th \emph{$s$-peri-Catalan number}, denoted $P_n^s$, gives the number of reduced basic quasigroup words of length $n$ in the free quasigroup on an alphabet of $s$ letters.
\end{definition}

Since there are no constants in the language of quasigroups, $P^s_n=0$ if $n=0$ or $s=0$. Now in our bookkeeping, there are $C_n$ rooted binary trees with $n$ leaves. Then over an $s$-element alphabet, each rooted binary tree may be annotated with any one of the $s$ letters at each of its $n$ leaves, along with any one of the three basic quasigroup operations at each of its $n-1$ nodes (including the root), to yield a basic parsing tree. Thus the inequality
\begin{equation}\label{eq:pcatupperlower}
P^s_n \leq 3^{n-1}s^nC_n
\end{equation}
provides an upper bound on the $n$-th $s$-peri-Catalan number. While the upper bound is exact only for $n<3$, its asymptotic significance (the so-called ``irrelevance of quasigroup identities") will be discussed in Section~\ref{S:Asympttc}.

\renewcommand{\arraystretch}{1.2}
\begin{table}[h]
\caption{The first ten peri-Catalan numbers for $s=1,2,3$.}\label{Tb:FirstTen}
\centering
\begin{tabular}{|c|c|c|c|}
\hline
$n$ & $P^1_n$ 	& $P^2_n$	& $P^3_n$\\ \hline
1	& 1			& 2			& 3			\\ 
2	& 3 			& 12			& 27			\\ 
3	& 12 			& 120		& 432		\\ 
4	& 87 			&1,752		& 9,531		\\ 
5	& 666 		& 28,224		& 233,766		\\ 
6	& 5,478 		& 487,464		& 6,143,094		\\ 
7	& 47,322		& 8,814,312	& 169,029,666		\\ 
8	& 422,145 	& 164,734,560	& 4,808,015,253		\\ 
9	& 3,859,026 	& 3,156,739,080 	& 140,243,036,202		\\ 
10	& 35,967,054	& 61,689,134,928	& 4,172,008,467,726		\\ \hline
\end{tabular}
\end{table}
\renewcommand{\arraystretch}{1}

\subsection{The inductive process}\label{SS:IndProces}

Consider $1<n\in\mathbb N$ and $1\le k<n$. We may construct a quasigroup word of length $n$ by adjoining a reduced word of length $n-k$ to a reduced word of length $k$, with one of the three basic quasigroup operations as the connective. By Definition~\ref{D:AuxBivar}, the number of reduced words obtained thus will be $m^s(n-k,k)$ for each connective.

Disregarding reductions, for each $1\le k<n$, we construct $3P^s_{n-k}P^s_k$ basic quasigroup words of length $n$ in this fashion, and thus obtain
\begin{equation}\label{E:pcatbound}
P^s_n=3\sum_{k=1}^{n-1}m^s(n-k,k)\leq 3\sum_{k=1}^{n-1} P^s_{n-k}P^s_k
\end{equation}
as an upper bound on the $n$-th $s$-peri-Catalan number.

The next section describes the reductions encountered in the inductive process. There, use of Convention~\ref{CV:BaseFull} is made to invoke full quasigroup word equivalents of the basic quasigroup words as they are combined, in order to identify possible cancelations.

\subsection{Root vertex cancelations}\label{S:rootcancellations}

Let $u$ and $v$ be reduced quasigroup words, of respective lengths $k$ and $n-k$, that are connected by a quasigroup operation $\mu^g$ to form $uv\,\mu^g$ at a step of the inductive process, as indicated by the outer level of boxes in Figure~\ref{F:RotVerCn}.

\begin{figure}[hbt]
\begin{picture}{(250,150)}

\put(21,20){$\mu^g$}
\put(26,22){\oval(20,25)}
\put(26,35){\line(0,1){49}}
\put(5,130){length $k$}
\put(6,100){$\boxed{\rule{4mm}{0mm}\rule[-4mm]{0mm}{10mm}u\rule{4mm}{0mm}}$}
\put(37,20){\line(1,0){72}}

\put(175,130){length $n-k$}
\put(110,100){$\boxed{\rule{30mm}{0mm}\rule[-34mm]{0mm}{40mm}v\rule{10mm}{0mm}}$}
\put(170,50){length $n-2k$}
\put(181,20){$\boxed{\rule{4mm}{0mm}\rule[-4mm]{0mm}{10mm}v'\rule{4mm}{0mm}}$}

\put(120,105){length $k$}
\put(121,75){$\boxed{\rule{4mm}{0mm}\rule[-4mm]{0mm}{10mm}u\rule{4mm}{0mm}}$}
\put(134,20){$\mu^{\tau g}$}
\put(141,22){\oval(20,25)}
\put(141,35){\line(0,1){25}}

\put(152,20){\line(1,0){28}}

\end{picture}
\caption{Root vertex cancelation}\label{F:RotVerCn}
\end{figure}
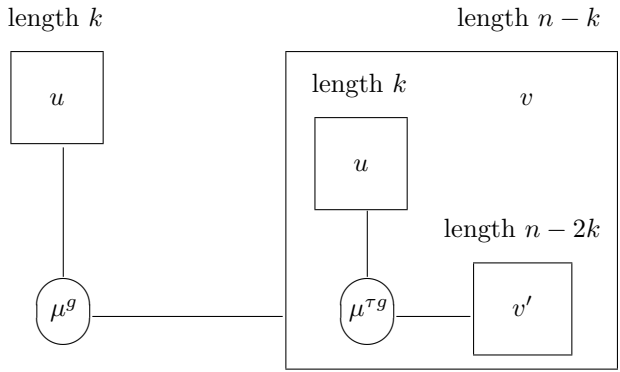

\begin{proposition}\label{P:cancel_formats}
During the assembly of $uv\,\mu^g$ within the inductive process, with reduced word $u$ of length $k$ and reduced word $v$ of length $n-k$, cancelation occurs if and only if there is a (necessarily reduced) word $v'$ of length $n-2k$ such that $v=uv'\,\mu^{\tau g}$.
\end{proposition}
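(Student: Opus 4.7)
The plan is to leverage Corollary~\ref{C:hyperqgp}, which compresses the six defining quasigroup identities into the single triality-symmetric schema $x\,xy\mu^{\tau h}\,\mu^{h}=y$ as $h$ ranges over $S_3$. The key observation is that, because $u$ and $v$ are already reduced, any cancelation inside $uv\,\mu^g$ must make essential use of the freshly introduced root node annotated by $\mu^g$: if the matched rewrite pattern lay entirely inside the parsing subtree of $u$ or of $v$, then that subtree would already fail to be reduced, contradicting the hypothesis on $u$ or $v$.

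Given this localization, I would pattern-match the canonical identity $x\,xy\mu^{\tau h}\,\mu^{h}=y$ against the parsing tree of $uv\,\mu^g$ at the root. The outer annotation forces $h=g$. The variable $x$ of the pattern unifies with the full left subtree $u$. The sub-pattern $xy\,\mu^{\tau g}$ must then unify with $v$: this forces the parsing tree of $v$ to have root annotation $\mu^{\tau g}$, left child equal to $u$ (matching the same $x$), and right child some word $v'$ (matching $y$). Leaf-counting yields $|v'|=|v|-|u|=n-2k$, and since $v$ is reduced so is the subword $v'$. Conversely, given $v=uv'\,\mu^{\tau g}$ of the prescribed shape, the identity from Corollary~\ref{C:hyperqgp} applied at the root gives
\[
uv\,\mu^g \;=\; u\,(uv'\,\mu^{\tau g})\,\mu^g \;=\; v',
\]
so cancelation does occur.

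The main obstacle is making the localization step rigorous: one must argue that every possible cancelation in $uv\,\mu^g$ is a rewrite whose LHS pattern includes the root. This rests on Evans' normal-form theorem and the depth-two structure of the LHS of each of the six canonical identities --- any applied rewrite either sits entirely inside $u$ or $v$ (forbidden by reducedness) or else has its root node at the overall root of $uv\,\mu^g$. Once this is in hand, the triality-compressed formulation of Corollary~\ref{C:hyperqgp} makes the pattern match essentially mechanical, circumventing a six-fold case analysis over the individual identities (SL), (IL), (SR), (IR), (DL), (DR).
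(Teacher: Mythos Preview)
Your proposal is correct and follows essentially the same approach as the paper's proof: both invoke Corollary~\ref{C:hyperqgp} to identify the unique cancelation schema $u\,uv'\mu^{\tau g}\,\mu^g=v'$ available at the root, and both observe that $v'$ is reduced as a subword of the reduced word $v$. Your write-up is considerably more explicit about the localization step (why the rewrite must involve the root node), which the paper's two-sentence proof leaves implicit.
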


\begin{proof}
The unique cancelations available are of the form $u\,uv'\mu^{\tau g}\,\mu^{g}=v'$ described in Corollary~\ref{C:hyperqgp}. Since the word $v=uv'\,\mu^{\tau g}$ is reduced, it follows that the subword $v'$ is also reduced.
\end{proof}

The cancelation condition in Proposition~\ref{P:cancel_formats} is illustrated by the inner level of boxes on the right hand side of Figure~\ref{F:RotVerCn}.

\begin{proposition}\label{P:auxbivreductions}
Consider $1<n\in\mathbb N$ and $1\le k<n$.
\begin{enumerate}
\item[$(\mathrm a)$]
The number of cancelations incurred during the inductive process when a word of length $k$ is connected to a word of length $n-k$ by a given operation $\mu^g$ from the set \eqref{E:homospop} is $m^s(n-2k,k)$. In particular, no cancelation occurs when $n=2k$.
\item[$(\mathrm b)$]
The formula
\begin{equation}\label{E:PnkPkmnk}
m^s(n-k,k)=P_{n-k}^sP_k^s-m^s(n-2k,k)
\end{equation}
holds.
\end{enumerate}
\end{proposition}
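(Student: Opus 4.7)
The plan is to prove part~(a) by a direct bijective count, and then to obtain part~(b) by inclusion-exclusion on the pairs of reduced subwords used in the inductive process.

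For part~(a), I invoke Proposition~\ref{P:cancel_formats}: a pair $(u,v)$ of reduced words with $u$ of length $k$ and $v$ of length $n-k$ produces a cancelation under the connective $\mu^g$ precisely when $v = uv'\mu^{\tau g}$ for some (automatically reduced) word $v'$ of length $n-2k$. Because the reduced word $v$ determines a unique parsing tree, the decomposition $v = uv'\mu^{\tau g}$, when it exists, is uniquely determined by $v$. Hence the cancelations are in bijection with the reduced $(n-k)$-leaf parsing trees having a reduced $k$-leaf left subtree, a reduced $(n-2k)$-leaf right subtree, and $\mu^{\tau g}$ at the root. By Definition~\ref{D:AuxBivar} together with the triality invariance of Remark~\ref{R:AuxBivar}(a), the cardinality of this set is $m^s(k, n-2k)$, independent of the choice of root operation, and by Remark~\ref{R:AuxBivar}(b) this equals $m^s(n-2k, k)$. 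The special case $n=2k$ is then immediate from Remark~\ref{R:AuxBivar}(c), since $m^s(0, k) = 0$; intuitively, a successful cancelation would demand a positive-length subword $v'$ that simply is not available.

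For part~(b), I exchange arguments via Remark~\ref{R:AuxBivar}(b), writing $m^s(n-k, k) = m^s(k, n-k)$, and interpret the latter by Definition~\ref{D:AuxBivar} as the number of reduced $n$-leaf parsing trees with a reduced $k$-leaf left subtree, a reduced $(n-k)$-leaf right subtree, and some fixed root operation $\mu^g$. Without imposing the reducedness condition on the full tree, the two subtrees may be chosen independently, yielding $P^s_k P^s_{n-k}$ candidate trees. By part~(a), exactly $m^s(n-2k, k)$ of these candidates fail to be reduced, so subtraction gives the claimed identity.

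I anticipate no serious obstacle; the only real subtlety is the notational bookkeeping of which argument of $m^s(\cdot,\cdot)$ labels the left subtree versus the right, and which of the six operations appears at the root vertex. Both are dispatched cleanly by the invariance and symmetry properties of $m^s$ recorded in Remark~\ref{R:AuxBivar}.
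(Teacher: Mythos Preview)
Your proof is correct and follows essentially the same approach as the paper's own proof, which is very terse (one sentence per part). You have simply spelled out the details that the paper leaves implicit: the uniqueness of the decomposition of $v$, the invocation of triality invariance and symmetry from Remark~\ref{R:AuxBivar} to pass between $m^s(k,n-2k)$ and $m^s(n-2k,k)$, and the inclusion-exclusion reading of~(b).
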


\begin{proof}
(a)
A reduced word $u$ of length $k$ will appear at most once in a reduced word $v$ of length $n-k$ with $\mu^{\tau g}$ in the root vertex. The number of such configurations (illustrated by the right-hand box in Figure~\ref{F:RotVerCn}) is given by $m^s(n-2k,k)$.
\vskip 1.5mm
\noindent
(b)
The formula \eqref{E:PnkPkmnk} results from (a) and Definition~\ref{D:AuxBivar}.
\end{proof}

\subsection{The Euclidean Algorithm}\label{SS:EucliAlg}

The next section presents the main recursive formula for peri-Catalan numbers. The formula is structured around the Euclidean Algorithm, using notation established in this paragraph following \cite[\S1.6]{ITAA}.

For $1<n\in\mathbb N$ and $1\leq k\leq n-1$, let $r^k_{-1}=n$ and $r^k_{0}=k$. Consider the quotients $q^k_{l}$ and remainders $r^k_{l}$ for $1\le l\le L_{k+1}$ as given below, resulting from calls to the Division Algorithm in the computation of $\gcd(n,k)$ by the Euclidean Algorithm:
\begin{align}\label{E:QuotnRem}
r^k_{-1}&=q^k_{1}r^k_{0}+r^k_{1}\,,
\ldots\,,
r^k_{l-2}=q^k_{l}r^k_{l-1}+r^k_{l}\,,
\ldots\,,
r^k_{L_{k}-1}=q^k_{L_{k}+1}r^k_{L_k}+r^k_{L_{k}+1}\,.
\end{align}
Here, $r^k_{L_{k}+1}=0$ and $\gcd(n,k)=r^k_{L_k}$. Define quantities $\epsilon^k_l$ for $0\le l\le L_{k}$ recursively by
\begin{equation}\label{E:RecurEps}
\epsilon^k_0=1
\quad
\mbox{ and }
\quad
\epsilon^k_{l+1}=\epsilon^k_{l}+q^k_{l+1} \, .
\end{equation}
These quantities, along with the quotients $q^k_{l}$ and remainders $r^k_{l}$, are used in the following section.

\subsection{The recursive formula for peri-Catalan numbers}\label{SS:RecProPC}

\begin{lemma}\label{L:BaseCase}
Using the notation $r^k_{-1}=n$, $r^k_{0}=k$, $r^k_{-1}=q^k_{1}r^k_{0}+r^k_{1}$, and $\epsilon^k_0=1$ from \S\ref{SS:EucliAlg}, the formula
\begin{align}\label{E:BaseCase}
m^s(n-k,k)
=(-1)^{q^k_{1}-1}m^s(r^k_{0},r^k_{1})
+\sum_{j^k_{0}=1}^{q^k_{1}-1} (-1)^{\epsilon^k_{0}+j^k_{0}} P^s_{r^k_{-1}-j^k_{0}r^k_{0}}P^s_{r^k_{0}}
\end{align}
holds for $1<n\in\mathbb N$ and $1\leq k\leq n-1$.
\end{lemma}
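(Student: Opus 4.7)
The plan is to iterate the recursion $m^s(A,B) = P^s_A P^s_B - m^s(A-B,B)$ from Proposition~\ref{P:auxbivreductions}(b), peeling off one factor $P^s_{n-jk}P^s_k$ at a time with alternating sign, and then to apply the symmetry $m^s(a,b) = m^s(b,a)$ from Remark~\ref{R:AuxBivar}(b) to the leftover auxiliary bivariate so as to present it in the Euclidean-Algorithm form $m^s(r^k_0,r^k_1)$.

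Concretely, I would first establish by induction on $m\ge 0$ the intermediate identity
\[
m^s(n-k,k) \;=\; \sum_{j=1}^{m} (-1)^{j-1}\, P^s_{n-jk}\, P^s_k \;+\; (-1)^{m}\, m^s\bigl(n-(m+1)k,\,k\bigr),
\]
the inductive step being one application of Proposition~\ref{P:auxbivreductions}(b) to the trailing bivariate. I would then specialize to $m = q^k_1 - 1$ and use the opening line $n = q^k_1 r^k_0 + r^k_1$ of the Euclidean chain \eqref{E:QuotnRem} to identify $n - q^k_1 k = r^k_1$, so that the leftover term becomes $m^s(r^k_1, r^k_0)$, which equals $m^s(r^k_0, r^k_1)$ by the symmetry of the auxiliary bivariate. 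Finally, since $\epsilon^k_0 = 1$ by \eqref{E:RecurEps}, the iteration sign $(-1)^{j-1}$ coincides with $(-1)^{\epsilon^k_0 + j}$, matching \eqref{E:BaseCase}.

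The only delicate aspect is the boundary behavior, which I expect to be absorbed by the conventions of Remark~\ref{R:AuxBivar}(c) rather than requiring separate argument. When $q^k_1 = 1$, i.e.\ $n - k = r^k_1 < k$, the iteration is vacuous (the sum is empty), and the claim reduces to the single symmetry step $m^s(n-k,k) = m^s(r^k_1, k) = m^s(r^k_0, r^k_1)$. When $r^k_1 = 0$, the leftover $m^s(r^k_0, r^k_1) = m^s(k,0)$ vanishes by convention, and the formula collapses to the alternating sum. I do not anticipate a significant obstacle: the lemma is a clean unrolling of the one-step recursion combined with the symmetry, packaged precisely so that a subsequent induction on successive Division-Algorithm steps $(n,k)\leadsto(k,r^k_1)\leadsto\cdots$ will yield the main theorem.
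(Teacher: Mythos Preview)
Your proposal is correct and follows essentially the same approach as the paper: both proofs iterate the one-step recursion \eqref{E:PnkPkmnk} a total of $q^k_1-1$ times, collecting the alternating products $P^s_{n-jk}P^s_k$, and invoke the symmetry of Remark~\ref{R:AuxBivar}(b) to place the leftover bivariate in the form $m^s(r^k_0,r^k_1)$. The only cosmetic difference is that the paper builds the symmetry into the induction hypothesis itself (writing the residual term as $m^s\bigl(r^k_0,\,r^k_{-1}-(i+1)r^k_0\bigr)$ throughout), whereas you keep it as $m^s\bigl(n-(m+1)k,\,k\bigr)$ and apply symmetry once at the end; this changes nothing of substance.
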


\begin{proof}
It will be shown, by induction on $i$, that
\begin{equation}\label{E:BasCasIH}
m^s(n-k,k)
=(-1)^{i}m^s\big(r^k_{0},r^k_{-1}-(i+1)r^k_{0}\big)
+\sum_{j^k_{0}=1}^{i} (-1)^{\epsilon^k_{0}+j^k_{0}} P^s_{r^k_{-1}-j^k_{0}r^k_{0}}P^s_{r^k_{0}}
\end{equation}
for $0\le i< q^k_1$. Note that \eqref{E:BasCasIH} for $i=q^k_1-1$ yields \eqref{E:BaseCase}. On the other hand, the base of the induction, namely \eqref{E:BasCasIH} with $i=0$, is given by Remark~\ref{R:AuxBivar}(b).

Now suppose that the induction hypothesis \eqref{E:BasCasIH} holds for $0\le i<q^k_1-1$. Then
\begin{align*}
m^s(n-k,k)
&
=(-1)^{i}m^s\big(r^k_{0},r^k_{-1}-(i+1)r^k_{0}\big)
+\sum_{j^k_{0}=1}^{i} (-1)^{\epsilon^k_{0}+j^k_{0}} P^s_{r^k_{-1}-j^k_{0}r^k_{0}}P^s_{r^k_{0}}
\\
&
=(-1)^{i}\Big[P^s_{r^k_{-1}-(i+1)r^k_{0}}P^s_{r^k_{0}}
-m^s\big(r^k_{0},r^k_{-1}-(i+2)r^k_{0}\big)\Big]
\\
&
\rule{53mm}{0mm}
+\sum_{j^k_{0}=1}^{i} (-1)^{\epsilon^k_{0}+j^k_{0}}
P^s_{r^k_{-1}-j^k_{0}r^k_{0}}P^s_{r^k_{0}}
\\
&
=(-1)^{i+1}m^s\big(r^k_{0},r^k_{-1}-(i+2)r^k_{0}\big)
\\
&
\rule{6mm}{0mm}
+(-1)^{\epsilon^k_0+(i+1)}P^s_{r^k_{-1}-(i+1)r^k_{0}}P^s_{r^k_{0}}
+\sum_{j^k_{0}=1}^{i} (-1)^{\epsilon^k_{0}+j^k_{0}}
P^s_{r^k_{-1}-j^k_{0}r^k_{0}}P^s_{r^k_{0}}
\\
&
=(-1)^{i+1}m^s\big(r^k_{0},r^k_{-1}-(i+2)r^k_{0}\big)
+\sum_{j^k_{0}=1}^{i+1} (-1)^{\epsilon^k_{0}+j^k_{0}}
P^s_{r^k_{-1}-j^k_{0}r^k_{0}}P^s_{r^k_{0}}
\end{align*}
by \eqref{E:PnkPkmnk} and Remark~\ref{R:AuxBivar}(b), as required for the induction step.
\end{proof}

\begin{corollary}\label{C:BaseCase}
The formula
\begin{align}\label{E:BaseCass}
m^s(n-k,k)
=(-1)^{\epsilon^k_{1}}m^s(r^k_{0},r^k_{1})
+\sum_{j^k_{0}=1}^{q^k_{1}-1} (-1)^{\epsilon^k_{0}+j^k_{0}} P^s_{r^k_{-1}-j^k_{0}r^k_{0}}P^s_{r^k_{0}}
\end{align}
holds for $1<n\in\mathbb N$ and $1\leq k\leq n-1$.
\end{corollary}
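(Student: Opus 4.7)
The plan is to show that Corollary \ref{C:BaseCase} is essentially a cosmetic restatement of Lemma \ref{L:BaseCase}, obtained by rewriting the sign $(-1)^{q^k_1-1}$ in the leading term. To that end, I would first invoke the recursive definition \eqref{E:RecurEps} with $l=0$, which gives
\[
\epsilon^k_{1}=\epsilon^k_{0}+q^k_{1}=1+q^k_{1} \, .
\]
Reducing the exponent modulo $2$, this yields
\[
(-1)^{\epsilon^k_{1}}=(-1)^{1+q^k_{1}}=(-1)^{q^k_{1}-1} \, ,
\]
since $(-1)^{2}=1$.

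With this identity in hand, the result is immediate: substituting $(-1)^{\epsilon^k_{1}}$ for $(-1)^{q^k_{1}-1}$ in the conclusion \eqref{E:BaseCase} of Lemma \ref{L:BaseCase} produces exactly the right-hand side of \eqref{E:BaseCass}. Since the hypotheses $1<n\in\mathbb{N}$ and $1\le k\le n-1$ are identical, the formula \eqref{E:BaseCass} holds in the same range.

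There is no genuine obstacle here; the corollary merely repackages the sign on the leading $m^s(r^k_0,r^k_1)$ term into the uniform $\epsilon^k_l$-notation of \S\ref{SS:EucliAlg}, setting up the pattern needed for the subsequent inductive unwinding of the Euclidean Algorithm in the proof of the main Theorem~\ref{T:ExactPsn}. The only minor care required is to note that the sum on the right-hand side is unchanged, so nothing further needs checking beyond the parity calculation above.
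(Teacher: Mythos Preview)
Your proposal is correct and follows essentially the same approach as the paper: both arguments simply observe from \eqref{E:RecurEps} that $\epsilon^k_1=1+q^k_1$, whence $(-1)^{\epsilon^k_1}=(-1)^{q^k_1-1}$, so \eqref{E:BaseCass} is a direct restatement of \eqref{E:BaseCase}.
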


\begin{proof}
Note that $(-1)^{\epsilon^k_{1}}=(-1)^{1+q^k_{1}}=(-1)^{q^k_{1}-1}$ by \eqref{E:RecurEps}, so \eqref{E:BaseCass} is a restatement of \eqref{E:BaseCase}.
\end{proof}

\begin{lemma}\label{L:InduStep}
Using the notation of \S\ref{SS:EucliAlg}, suppose that $m^s(n-k,k)$ is given by
\begin{align}\label{E:IndHypoP}
K(n,k)+
(-1)^{\epsilon^k_{l}}m^s(r^k_{l-1},r^k_{l})
+
\sum_{i=0}^{l-1}
\sum_{j^k_{i}=0}^{q^k_{i+1}-1} (-1)^{\epsilon^k_{i}+j^k_{i}} P^s_{r^k_{i-1}-j^k_{i}r^k_{i}}P^s_{r^k_{i}}
\end{align}
for $1<n\in\mathbb N$, $1\leq k\leq n-1$, and $0<l\le L_k$, with an additive term $K(n,k)$.
Then $m^s(n-k,k)=$
\begin{align}\label{E:IndConcP}
K(n,k)
+(-1)^{\epsilon^k_{l+1}}m^s(r^k_{l},r^k_{l+1})
+
\sum_{i=0}^{l}
\sum_{j^k_{i}=0}^{q^k_{i+1}-1} (-1)^{\epsilon^k_{i}+j^k_{i}} P^s_{r^k_{i-1}-j^k_{i}r^k_{i}}P^s_{r^k_{i}}
\end{align}
follows.
\end{lemma}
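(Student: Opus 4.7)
The plan is to transform only the residual term $(-1)^{\epsilon^k_l}m^s(r^k_{l-1},r^k_l)$ appearing in the hypothesized expression \eqref{E:IndHypoP}, leaving the additive term $K(n,k)$ and the double sum over $i=0,\ldots,l-1$ undisturbed. The mechanism is precisely that of Lemma~\ref{L:BaseCase}, transported from the first step of the Euclidean algorithm to the $(l+1)$-st step.

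Starting from $m^s(r^k_{l-1},r^k_l)$, I would invoke the cancelation identity \eqref{E:PnkPkmnk} of Proposition~\ref{P:auxbivreductions}(b), written in the form $m^s(A,B)=P^s_A P^s_B-m^s(A-B,B)$, with $A=r^k_{l-1}$ and $B=r^k_l$. Iterating this $q^k_{l+1}$ times — legitimate because the division call $r^k_{l-1}=q^k_{l+1}r^k_l+r^k_{l+1}$ guarantees each intermediate first argument $r^k_{l-1}-j\,r^k_l$ is still positive for $0\le j\le q^k_{l+1}-1$ — yields the subsidiary identity
\[
m^s(r^k_{l-1},r^k_l)=\sum_{j=0}^{q^k_{l+1}-1}(-1)^{j}P^s_{r^k_{l-1}-j\,r^k_l}P^s_{r^k_l}+(-1)^{q^k_{l+1}}m^s(r^k_{l+1},r^k_l).
\]
This formula is justified by an auxiliary induction on $j$ that is structurally identical to the induction carried out in the proof of Lemma~\ref{L:BaseCase}; I would cite that argument by analogy rather than rewrite it.

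Next, multiplying through by $(-1)^{\epsilon^k_l}$, invoking the recurrence $\epsilon^k_{l+1}=\epsilon^k_l+q^k_{l+1}$ from \eqref{E:RecurEps}, and applying the symmetry $m^s(r^k_{l+1},r^k_l)=m^s(r^k_l,r^k_{l+1})$ of Remark~\ref{R:AuxBivar}(b), converts the above into
\[
(-1)^{\epsilon^k_l}m^s(r^k_{l-1},r^k_l)=\sum_{j^k_l=0}^{q^k_{l+1}-1}(-1)^{\epsilon^k_l+j^k_l}P^s_{r^k_{l-1}-j^k_l r^k_l}P^s_{r^k_l}+(-1)^{\epsilon^k_{l+1}}m^s(r^k_l,r^k_{l+1}).
\]
Substituting this back into \eqref{E:IndHypoP}, the fresh single sum over $j^k_l$ becomes precisely the $i=l$ row of the double sum in \eqref{E:IndConcP}; combined with the preserved rows for $i=0,\ldots,l-1$, it fills out the full range $i=0,\ldots,l$, producing \eqref{E:IndConcP} exactly.

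The main obstacle is purely the bookkeeping of signs and indices in the auxiliary induction, which is a verbatim replay of the calculation already detailed in Lemma~\ref{L:BaseCase}. A minor subtlety arises in the terminal case $l+1=L_k+1$, where $r^k_{l+1}=0$ forces $m^s(r^k_l,r^k_{l+1})=0$ by Remark~\ref{R:AuxBivar}(c); the stated formula nonetheless remains valid, with the trailing term silently vanishing.
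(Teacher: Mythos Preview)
Your proposal is correct and takes essentially the same approach as the paper: both arguments iterate the cancelation identity \eqref{E:PnkPkmnk} exactly $q^k_{l+1}$ times, invoke the symmetry of Remark~\ref{R:AuxBivar}(b), and use the recurrence $\epsilon^k_{l+1}=\epsilon^k_l+q^k_{l+1}$ to package the signs. The only difference is presentational---the paper carries the ambient terms $K(n,k)$ and the double sum through each step of its induction on $h$, whereas you isolate the residual term $(-1)^{\epsilon^k_l}m^s(r^k_{l-1},r^k_l)$, transform it, and substitute back; the mathematical content is identical.
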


\begin{proof}
It will be shown, by induction on $h$, that
\begin{align}\label{E:InStepIH}
&m^s(n-k,k)=K(n,k)
+(-1)^{\epsilon^k_{l}+h}m^s\big(r^k_{l},r^k_{l-1}-hr^k_{l}\big)
\\ \notag
&
+\sum_{j^k_{l}=0}^{h-1} (-1)^{\epsilon^k_{l}+j^k_{l}} P^s_{r^k_{l-1}-j^k_{l}r^k_{l}}P^s_{r^k_{l}}
+
\sum_{i=0}^{l-1}
\sum_{j^k_{i}=0}^{q^k_{i+1}-1}(-1)^{\epsilon^k_{i}+j^k_{i}} P^s_{r^k_{i-1}-j^k_{i}r^k_{i}}P^s_{r^k_{i}}
\end{align}
for $0\le h\le q^k_{1+1}$. Recalling \eqref{E:RecurEps}, note that \eqref{E:InStepIH} with $h=q^k_{l+1}$ yields the expression \eqref{E:IndConcP} for $m^s(n-k,k)$. On the other hand, the base of the induction, namely \eqref{E:InStepIH} with $h=0$, is given by \eqref{E:IndHypoP}.

Starting from \eqref{E:InStepIH} with $h<q^k_{1+1}$, the induction step is given as $m^s(n-k,k)=$
\begin{align*}
&
K(n,k)
+(-1)^{\epsilon^k_{l}+h}m^s\big(r^k_{l},r^k_{l-1}-hr^k_{l}\big)
\\
&
+\sum_{j^k_{l}=0}^{h-1}(-1)^{\epsilon^k_{l}+j^k_{l}} P^s_{r^k_{l-1}-j^k_{l}r^k_{l}}P^s_{r^k_{l}}
+
\sum_{i=0}^{l-1}
\sum_{j^k_{i}=0}^{q^k_{i+1}-1}(-1)^{\epsilon^k_{i}+j^k_{i}} P^s_{r^k_{i-1}-j^k_{i}r^k_{i}}P^s_{r^k_{i}}
\\
&
=K(n,k)
+(-1)^{\epsilon^k_{l}+h}
\Big[
P^s_{r^k_{l-1}-hr^k_{l}}P^s_{r^k_{l}}
-m^s\big(r^k_{l},r^k_{l-1}-(h+1)r^k_{l}\big)
\Big]
\\
&
+\sum_{j^k_{l+1}=0}^{h-1} (-1)^{\epsilon^k_{l}+j^k_{l}} P^s_{r^k_{l-1}-j^k_{l}r^k_{l}}P^s_{r^k_{l}}
+
\sum_{i=0}^{l-1}
\sum_{j^k_{i}=0}^{q^k_{i+1}-1}(-1)^{\epsilon^k_{i}+j^k_{i}} P^s_{r^k_{i-1}-j^k_{i}r^k_{i}}P^s_{r^k_{i}}
\\
&
=K(n,k)
+(-1)^{\epsilon^k_{l}+(h+1)}m^s\big(r^k_{l},r^k_{l-1}-(h+1)r^k_{l}\big)
\\ \notag
&
+\sum_{j^k_{l}=0}^{h} (-1)^{\epsilon^k_{l}+j^k_{l}} P^s_{r^k_{l-1}-j^k_{l}r^k_{l}}P^s_{r^k_{l}}
+
\sum_{i=0}^{l-1}
\sum_{j^k_{i}=0}^{q^k_{i+1}-1} (-1)^{\epsilon^k_{i}+j^k_{i}} P^s_{r^k_{i-1}-j^k_{i}r^k_{i}}P^s_{r^k_{i}}
\end{align*}
by \eqref{E:PnkPkmnk} and Remark~\ref{R:AuxBivar}(b).
\end{proof}

\begin{proposition}\label{P:m_unpacking}
Let $1<n\in\mathbb N$ and $1<k<n$. Then $m^s(n-k,k)$ is specified by
\begin{align}\label{E:mnkUnpak}
P^s_{r^k_{-1}}P^s_{r^k_0}
+
\sum_{i=0}^{L_k}
\sum_{j^k_{i}=0}^{q^k_{i+1}-1} (-1)^{\epsilon^k_{i}+j^k_{i}} P^s_{r^k_{i-1}-j^k_{i}r^k_{i}}P^s_{r^k_{i}}
\end{align}
in the notation of \S\ref{SS:EucliAlg}.
\end{proposition}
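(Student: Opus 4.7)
The plan is to iterate the two preceding results along the Euclidean Algorithm, using Corollary~\ref{C:BaseCase} as the base case and Lemma~\ref{L:InduStep} as the inductive step. Each inductive step converts the ``depth $l$'' expression for $m^s(n-k,k)$ into its ``depth $l+1$'' successor. Running the iteration all the way to the termination of the Euclidean Algorithm, and invoking the zero convention of Remark~\ref{R:AuxBivar}(c), exhausts the residual auxiliary bivariate and produces formula~\eqref{E:mnkUnpak}.

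First, I would re-express the output of Corollary~\ref{C:BaseCase} in the shape required by the hypothesis~\eqref{E:IndHypoP} of Lemma~\ref{L:InduStep}. The inner summation in~\eqref{E:BaseCass} starts at $j^k_0=1$, whereas~\eqref{E:IndHypoP} begins at $j^k_i=0$. Since $\epsilon^k_0=1$, the missing $j^k_0=0$ term equals $-P^s_{r^k_{-1}}P^s_{r^k_0}$, so adding and subtracting it yields
\begin{align*}
m^s(n-k,k) = P^s_{r^k_{-1}}P^s_{r^k_0} + (-1)^{\epsilon^k_1}\,m^s(r^k_0,r^k_1) + \sum_{j^k_0=0}^{q^k_1-1}(-1)^{\epsilon^k_0+j^k_0}P^s_{r^k_{-1}-j^k_0 r^k_0}P^s_{r^k_0},
\end{align*}
which is precisely the hypothesis~\eqref{E:IndHypoP} at $l=1$ with additive term $K(n,k)=P^s_{r^k_{-1}}P^s_{r^k_0}$.

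Next, I would apply Lemma~\ref{L:InduStep} successively for $l=1,2,\dots,L_k$. Each application replaces the residual auxiliary bivariate $m^s(r^k_{l-1},r^k_l)$ by $m^s(r^k_l,r^k_{l+1})$, while accruing a fresh double-summation block indexed by $i=l$. After all $L_k$ applications the formula reads
\begin{align*}
m^s(n-k,k) = P^s_{r^k_{-1}}P^s_{r^k_0} + (-1)^{\epsilon^k_{L_k+1}}\,m^s(r^k_{L_k},r^k_{L_k+1}) + \sum_{i=0}^{L_k}\sum_{j^k_i=0}^{q^k_{i+1}-1}(-1)^{\epsilon^k_i+j^k_i}P^s_{r^k_{i-1}-j^k_i r^k_i}P^s_{r^k_i}.
\end{align*}
Because $r^k_{L_k+1}=0$, the convention of Remark~\ref{R:AuxBivar}(c) makes the residual $m^s(r^k_{L_k},0)$ vanish, and what remains is exactly~\eqref{E:mnkUnpak}.

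The only real obstacle is bookkeeping: one must track the sign exponents $\epsilon^k_i$ through the shift encoded by~\eqref{E:RecurEps}, and verify that the boundary case $k\mid n$ (where $L_k=0$ and no inductive step is needed) is already handled by the initial re-expression, since then $m^s(r^k_0,r^k_1)=m^s(r^k_0,0)=0$. Once the base-case repackaging is performed, the remainder is a straightforward induction on the depth of the Euclidean Algorithm for $\gcd(n,k)$.
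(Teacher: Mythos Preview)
Your proposal is correct and follows essentially the same route as the paper's own proof: repackage Corollary~\ref{C:BaseCase} by inserting the missing $j^k_0=0$ summand to obtain the hypothesis~\eqref{E:IndHypoP} of Lemma~\ref{L:InduStep} with additive term $K(n,k)=P^s_{r^k_{-1}}P^s_{r^k_0}$, then iterate the lemma along the Euclidean Algorithm until the residual auxiliary bivariate vanishes by Remark~\ref{R:AuxBivar}(c). Your write-up is in fact slightly more explicit than the paper's about the add-and-subtract manoeuvre that aligns the base case with~\eqref{E:IndHypoP}, and about the degenerate case $k\mid n$.
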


\begin{proof}
It will be proved, by induction on $l$, that $m^s(n-k,k)=$
\begin{align}\label{E:InHyProp}
P^s_{r^k_{-1}}P^s_{r^k_0}
+(-1)^{\epsilon^k_{l+1}}m^s(r^k_{l},r^k_{l+1})
+
\sum_{i=0}^{l}
\sum_{j^k_{i}=0}^{q^k_{i+1}-1} (-1)^{\epsilon^k_{i}+j^k_{i}} P^s_{r^k_{i-1}-j^k_{i}r^k_{i}}P^s_{r^k_{i}}
\end{align}
holds for $0\le l\le L_k$. Note that \eqref{E:InHyProp} is just \eqref{E:IndConcP} with $$
K(n,k)
=P^s_{r^k_{-1}}P^s_{r^k_0}
=P^s_{n}P^s_k
$$
as the additive term.

The induction basis is given by \eqref{E:BaseCass} in Corollary~\ref{C:BaseCase}. The induction step is given by Lemma~\ref{L:InduStep}. Then the proposition is given by \eqref{E:InHyProp} with $l=L_k$, since $r^k_{L_k+1}=0$ and thus $m^s(r^k_{L_k},r^k_{L_k+1})=0$.
\end{proof}

\begin{corollary}\label{C:m_unpacking}
Let $1<n\in\mathbb N$ and $1<k<n$. Then $m^s(n-k,k)$ is specified by
\begin{align}\label{E:mnkUnpal}
\sum_{j^k_{0}=1}^{q^k_{1}-1} (-1)^{\epsilon^k_{0}+j^k_{0}} P^s_{r^k_{-1}-j^k_{0}r^k_{0}}P^s_{r^k_{0}}
+
\sum_{i=1}^{L_k}
\sum_{j^k_{i}=0}^{q^k_{i+1}-1} (-1)^{\epsilon^k_{i}+j^k_{i}} P^s_{r^k_{i-1}-j^k_{i}r^k_{i}}P^s_{r^k_{i}}
\end{align}
in the notation of \S\ref{SS:EucliAlg}.
\end{corollary}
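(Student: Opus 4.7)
The plan is to deduce this corollary directly from Proposition~\ref{P:m_unpacking} by a bookkeeping rearrangement: separate the $i=0$ contribution from the double sum, then pull out the $j^k_0=0$ summand inside that $i=0$ block, and verify that the pulled-out summand exactly cancels the leading standalone term $P^s_{r^k_{-1}}P^s_{r^k_0}$.

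More concretely, I would start by writing
\begin{align*}
\sum_{i=0}^{L_k}\sum_{j^k_i=0}^{q^k_{i+1}-1}(-1)^{\epsilon^k_i+j^k_i}P^s_{r^k_{i-1}-j^k_i r^k_i}P^s_{r^k_i}
&=\sum_{j^k_0=0}^{q^k_1-1}(-1)^{\epsilon^k_0+j^k_0}P^s_{r^k_{-1}-j^k_0 r^k_0}P^s_{r^k_0}\\
&\quad+\sum_{i=1}^{L_k}\sum_{j^k_i=0}^{q^k_{i+1}-1}(-1)^{\epsilon^k_i+j^k_i}P^s_{r^k_{i-1}-j^k_i r^k_i}P^s_{r^k_i}.
\end{align*}
The second double sum on the right is already the $i\ge 1$ piece that appears in \eqref{E:mnkUnpal}. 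It then remains to handle the single sum over $j^k_0$.

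Within that single sum, I would split off the $j^k_0=0$ term. Using $\epsilon^k_0=1$ from \eqref{E:RecurEps}, the $j^k_0=0$ summand evaluates to
$$
(-1)^{\epsilon^k_0+0}P^s_{r^k_{-1}-0\cdot r^k_0}P^s_{r^k_0}=-P^s_{r^k_{-1}}P^s_{r^k_0},
$$
which precisely cancels the standalone leading term $P^s_{r^k_{-1}}P^s_{r^k_0}$ in \eqref{E:mnkUnpak}. What is left of the $j^k_0$ sum is the range $1\le j^k_0\le q^k_1-1$, matching the first sum in \eqref{E:mnkUnpal}. Combining the two pieces gives \eqref{E:mnkUnpal}.

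There is no serious obstacle here, since the result is a cosmetic reformulation of Proposition~\ref{P:m_unpacking}; the only point requiring any care is tracking the sign, which is why one needs to invoke $\epsilon^k_0=1$ explicitly so that the $j^k_0=0$ contribution comes out with a minus sign and absorbs the leading $P^s_{r^k_{-1}}P^s_{r^k_0}$.
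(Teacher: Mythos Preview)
Your proposal is correct and follows exactly the paper's own argument: the paper's proof simply observes that the leading term $P^s_{r^k_{-1}}P^s_{r^k_0}$ cancels the $i=0$, $j^k_0=0$ summand of the double sum in \eqref{E:mnkUnpak}, which is precisely what you have written out in detail (including the use of $\epsilon^k_0=1$).
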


\begin{proof}
Note that $P^s_{r^k_{-1}}P^s_{r^k_0}$ cancels the $i=0$, $j^k_i=0$ summand in the term
$$
\sum_{i=0}^{L_k}
\sum_{j^k_{i}=0}^{q^k_{i+1}-1} (-1)^{\epsilon^k_{i}+j^k_{i}} P^s_{r^k_{i-1}-j^k_{i}r^k_{i}}P^s_{r^k_{i}}
$$
of \eqref{E:mnkUnpak} in Proposition~\ref{P:m_unpacking}. Thus \eqref{E:mnkUnpal} specifies $m^s(n-k,k)$ in the same way that \eqref{E:mnkUnpak} does.
\end{proof}

As there are no constants in the language of quasigroups, $P^s_0=0$. Corollary~\ref{C:m_unpacking} and \eqref{E:pcatbound} then yield the main result.

\begin{theorem}\label{T:ExactPsn}
For $1<n\in\mathbb N$, the $n$-th $s$-peri-Catalan number $P^s_n$ is given by
\begin{equation}\label{E:FormuPsn}
3\sum_{k=1}^{n-1}
\bigg\{
\sum_{j^k_{0}=1}^{q^k_{1}-1} (-1)^{\epsilon^k_{0}+j^k_{0}} P^s_{r^k_{-1}-j^k_{0}r^k_{0}}P^s_{r^k_{0}}
+
\sum_{i=1}^{L_k}
\sum_{j^k_{i}=0}^{q^k_{i+1}-1} (-1)^{\epsilon^k_{i}+j^k_{i}} P^s_{r^k_{i-1}-j^k_{i}r^k_{i}}P^s_{r^k_{i}}
\bigg\}
\end{equation}
in the notation of \S\ref{SS:EucliAlg}.
\end{theorem}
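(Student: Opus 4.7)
\textbf{Proof plan for Theorem~\ref{T:ExactPsn}.}
The plan is to combine two ingredients already developed in the paper: the ``aggregation identity'' $P^s_n = 3\sum_{k=1}^{n-1} m^s(n-k,k)$ from \eqref{E:pcatbound}, and the Euclidean-Algorithm unpacking of each auxiliary bivariate $m^s(n-k,k)$ given in Corollary~\ref{C:m_unpacking}. Substituting the latter into the former immediately produces the triple sum \eqref{E:FormuPsn}, the leading factor of $3$ reflecting the three choices of basic operation $\mu$, $\mu^{\tau}$, $\mu^{\sigma\tau\sigma}$ available at the root. So the only substantive work at the level of the theorem is checking that \eqref{E:pcatbound} is in fact an equality and that the bracketed expression from Corollary~\ref{C:m_unpacking} is legitimate for every admissible $k$.

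For the first check, I would note that by Definition~\ref{D:AuxBivar} each reduced basic word of length $n$ decomposes uniquely as a pair of reduced subwords of lengths $k$ and $n-k$ joined by one of the three basic operations at the root, the cancelations among such pairings having been absorbed into the definition of $m^s(n-k,k)$ through \eqref{E:PnkPkmnk} (Proposition~\ref{P:auxbivreductions}). Hence \eqref{E:pcatbound} is the exact count, not merely an upper bound.

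For the second check, the boundary case $k=1$ warrants separate attention since Corollary~\ref{C:m_unpacking} is stated for $1 < k < n$. Here the Euclidean data reduce to $r^1_{-1}=n$, $r^1_0=1$, $q^1_1=n$, $r^1_1=0$, so $L_1=0$; Lemma~\ref{L:BaseCase} then gives
\begin{equation*}
m^s(n-1,1) = (-1)^{n-1} m^s(1,0) + \sum_{j^1_0=1}^{n-1}(-1)^{1+j^1_0} P^s_{n-j^1_0}\,P^s_1,
\end{equation*}
and since $P^s_0 = 0$ forces $m^s(1,0)=0$ by Remark~\ref{R:AuxBivar}(c), only the first sum of Corollary~\ref{C:m_unpacking} survives while the double sum over $1 \le i \le L_1 = 0$ is vacuous. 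Thus the bracketed expression in \eqref{E:FormuPsn} remains valid at $k=1$, and summing $3m^s(n-k,k)$ over $1 \le k \le n-1$ completes the proof.

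The genuine difficulty lies upstream, in justifying Corollary~\ref{C:m_unpacking} itself, and that work is already carried out in Lemmas~\ref{L:BaseCase}--\ref{L:InduStep} and Proposition~\ref{P:m_unpacking} via a double induction: an inner induction on the number $j^k_i$ of subtractions within a single Division Algorithm call, and an outer induction on the level $l$ indexing the successive remainders $r^k_l$. Each inductive step feeds the recursion \eqref{E:PnkPkmnk} together with the swap $m^s(a,b)=m^s(b,a)$ of Remark~\ref{R:AuxBivar}(b) to replace the residual auxiliary term $m^s(r^k_{l-1}-h r^k_l,\, r^k_l)$ by $m^s(r^k_l,\, r^k_{l-1}-(h+1)r^k_l)$, advancing one further step of the Euclidean Algorithm and accumulating the appropriate signed product $P^s_{\bullet}P^s_{\bullet}$. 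With that structural machinery in hand, the theorem itself is a matter of direct substitution and bookkeeping.
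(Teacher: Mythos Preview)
Your proposal is correct and follows exactly the paper's approach: the paper's own proof is the single sentence ``Corollary~\ref{C:m_unpacking} and \eqref{E:pcatbound} then yield the main result,'' prefaced by the observation that $P^s_0=0$. Your write-up is in fact more careful than the paper's, since you make explicit the boundary case $k=1$ (excluded from the hypothesis $1<k<n$ of Corollary~\ref{C:m_unpacking}) and verify via Lemma~\ref{L:BaseCase} and Remark~\ref{R:AuxBivar}(c) that the bracketed expression in \eqref{E:FormuPsn} still computes $m^s(n-1,1)$ there.
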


\begin{example}
We compute the fourth 2-peri-Catalan number as follows. For $k=1, 2, 3$, the table
$$
\begin{tabular}{c|c|c}
$k=1$		& $k=2$			& $k=3$ 	\\ \hline
$4=4\cdot 1+0$ & $4=2\cdot 2+0$ 	& $4=1\cdot 3 + 1$\\
			&				& $3=3\cdot 1+0$
\end{tabular}
$$
displays the relevant runs of the Euclidean Algorithm, yielding $L_1=0$, $L_2=0$, and $L_3=1$ in the notation of \S\ref{SS:EucliAlg}.

We then compute $P^2_4$ as
\begin{align*}
P^2_4 =&3\sum_{k=1}^{3} \bigg\{\sum_{j^k_{0}=1}^{q^k_{1}-1} (-1)^{\epsilon^k_{0}+j^k_{0}} P^s_{r^k_{-1}-j^k_{0}r^k_{0}}P^s_{r^k_{0}} + \sum_{i=1}^{L_k} \sum_{j^k_{i}=0}^{q^k_{i+1}-1} (-1)^{\epsilon^k_{i}+j^k_{i}} P^s_{r^k_{i-1}-j^k_{i}r^k_{i}}P^s_{r^k_{i}}
\bigg\}\\
	=& 3\bigg\{ \sum_{j^1_0=1}^3 (-1)^{1+j^1_0}P^2_{4-j^1_0}P^2_1 + \sum_{j^2_0=1}^1 (-1)^{1+j^2_0} P^2_{4-2j^2_0}P^2_2 \\
	&+ \sum_{j^3_0=1}^0 (-1)^{1+j_0^3}P^2_{4-3j^3_1}P^2_3 +\sum_{j^3_1=0}^2(-1)^{\epsilon_1^3+j^3_1}P^2_{3-j^3_1}P^2_1\bigg\}\\
	=&3\bigg\{(P^2_3P^2_1-P^2_2P_1^2+P^2_1P^2_1) + P^2_2P^2_2 +(P^2_3P^2_1 -P^2_2P^2_1+P^2_1P^2_1)\bigg\}\\
	=&1,752
\end{align*}
to obtain the value displayed in Table~\ref{Tb:FirstTen}.

\end{example}

\section{Asymptotic behavior}\label{S:Asympttc}

Theorem~\ref{T:ExactPsn}, and in particular the form of the recursive expression \eqref{E:FormuPsn} for the $n$-th $s$-peri-Catalan number $P^s_n$, do not readily lend themselves to an analytical asymptotic estimate comparable with the estimate
$$
C_n\sim\frac{4^n}{\sqrt{\pi n^3}}
$$
for the Catalan numbers that results from their generating function \cite[I.2.4(33)]{FlajSedg}. Instead, we will sketch out a conjectured alternative approach, and present some numerical data in its support. The approach depends on the close relationship between the Catalan numbers and the peri-Catalan numbers.

\subsection{Asymptotic irrelevance of quasigroup identities}

The foundation of our approach is the following.

\begin{conjecture}[Asymptotic irrelevance of quasigroup identities]\label{Cj:AsIrQpId}
In the large, cancelation resulting from the quasigroup identities has a negligible effect on the asymptotic behavior of the peri-Catalan numbers $P^s_n$.
\end{conjecture}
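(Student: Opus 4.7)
The plan is to interpret Conjecture~\ref{Cj:AsIrQpId} as the assertion that $\log P^s_n / \log U^s_n \to 1$ as $s$ and $n$ become large, where $U^s_n := 3^{n-1} s^n C_n$ is the cancelation-free upper bound of \eqref{eq:pcatupperlower}, and to derive this from Proposition~\ref{P:auxbivreductions} together with the standard asymptotic $C_n \sim 4^n / \sqrt{\pi n^3}$. The starting observation is that $U^s_n$ itself satisfies the clean Catalan-type convolution $U^s_n = 3 \sum_{k=1}^{n-1} U^s_{n-k} U^s_k$ (inherited from $C_n = \sum_k C_{n-k} C_k$), whereas by \eqref{E:PnkPkmnk}
\begin{equation*}
P^s_n = 3 \sum_{k=1}^{n-1} \bigl( P^s_{n-k} P^s_k - m^s(n-2k, k) \bigr),
\end{equation*}
so the only obstruction to $P^s_n$ satisfying the same convolution is the cancelation term $m^s(n-2k, k)$.

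Setting $\rho^s_n := P^s_n / U^s_n$, the first step would be a simultaneous induction showing $a(s) \le \rho^s_n \le 1$ with $a(s) \to 1$ as $s \to \infty$. The upper bound is \eqref{eq:pcatupperlower}. For the lower bound I would use $m^s(n-2k,k) \le P^s_{n-2k} P^s_k$ together with $C_{n-2k}/C_{n-k} \to 4^{-k}$ to get
\begin{equation*}
\frac{m^s(n-2k, k)}{P^s_{n-k} P^s_k} \le \frac{U^s_{n-2k}}{P^s_{n-k}} = \frac{1 + o(1)}{(12s)^k \rho^s_{n-k}},
\end{equation*}
uniformly for $k = o(n)$. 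Summing the resulting geometric series in $k$, weighted by $U^s_{n-k} U^s_k / U^s_n$, should produce a total cancelation fraction of order $1/s$, closing the induction with $a(s) = 1 - O(1/s)$.

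Once $\rho^s_n$ is pinned in $[a(s), 1]$, a Fekete-type argument on $\log \rho^s_n$ (applied to the modified convolution) yields a growth exponent $\lambda(s) = \lim_n \tfrac{1}{n} \log \rho^s_n$ with $\lambda(s) = O(1/s)$, whence
\begin{equation*}
\frac{\log P^s_n}{\log U^s_n} = \frac{n \log(12 s) + O(\log n) + n \lambda(s) + o(n)}{n \log(12 s) + O(\log n)} \longrightarrow 1 - O\!\left(\frac{1}{s \log s}\right)
\end{equation*}
as $n \to \infty$, giving the qualitative content of \eqref{E:asnPtotc}.

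The main obstacle is the bootstrap. The contribution of $k$ near $n/2$, where the geometric decay in $k$ fails, has to be controlled separately; and the alternating signs in the Euclidean-Algorithm unpacking of Corollary~\ref{C:m_unpacking} must be shown not to accumulate sufficient destructive interference to drive $\rho^s_n$ to zero. Pinning down the numerical constant $0.019\ldots$ in \eqref{E:asnPtotc} is a much finer question, and will likely require either a probabilistic model in which a uniformly random reduced word is sampled as a subcritical branching process with cancelation rate $\asymp 1/s$, or a generating-function treatment of a refinement of $P^s_n$ that tracks the structure of the leftmost subword; the appearance of the golden ratio in \eqref{E:asnPtotc} then becomes plausible through the Fibonacci-worst-case runs of the Euclidean Algorithm noted in the introduction.
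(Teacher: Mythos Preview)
The statement you are attempting to prove is labeled a \emph{Conjecture} in the paper, and the paper offers no proof of it. Immediately after stating Conjecture~\ref{Cj:AsIrQpId}, the authors give only a one-sentence heuristic (that the two copies of $u$ in Figure~\ref{F:RotVerCn} are unlikely to match when $s$ is large) and then, in \S\ref{SS:EfOAsIrr}--\S\ref{SS:DepGeNum}, numerical evidence for the more precise Conjecture~\ref{Cj:DepGeNum}. So there is no ``paper's own proof'' to compare your proposal against; any complete argument you give would go strictly beyond the paper.

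As for the proposal itself: it is a reasonable strategy outline rather than a proof, and you yourself flag the gaps. The convolution identity $U^s_n = 3\sum_k U^s_{n-k}U^s_k$ and the bound $m^s(n-2k,k)\le P^s_{n-2k}P^s_k$ are correct, and the ratio estimate $U^s_{n-2k}/U^s_{n-k}\sim(12s)^{-k}$ is fine for bounded $k$. But the bootstrap you describe is circular as written: to get $\rho^s_n\ge a(s)$ you use $\rho^s_{n-k}\ge a(s)$ in the denominator, yet the implied constant in the $O(1/s)$ loss then depends on $a(s)$ itself, and you have not shown this closes. The middle range $k\approx n/2$ is a genuine problem, since there the Catalan ratio $C_{n-2k}/C_{n-k}$ is no longer small and $m^s(n-2k,k)$ can be of the same order as $P^s_{n-k}P^s_k$; the geometric-series argument does not touch this range. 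Finally, invoking a ``Fekete-type argument'' presupposes an approximate subadditivity for $-\log\rho^s_n$ that does not follow from the recursion you have written down. These are exactly the obstacles that keep Conjecture~\ref{Cj:AsIrQpId} a conjecture in the paper.
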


In Conjecture~\ref{Cj:AsIrQpId}, the phrase ``in the large" refers informally to large values of $s$ for almost all values of $n$.
Heuristically, referring to Figure~\ref{F:RotVerCn}, cancelation by a quasigroup identity would require the assignment of the $k$ variables in the first copy of the word $u$ to match exactly with the assignment of the $k$ variables in the second copy of $u$. Such an event should be relatively unlikely when there is a large total of $s$ variables available.

\subsection{The effect of asymptotic irrelevance}\label{SS:EfOAsIrr}

We conjecture that
\begin{equation}\label{E:Lim13612}
\lim_{s\to\infty}
\lim_{n\to\infty}
\frac{\log P^s_n}{\log C_n+n\log 3s-\log3}
=1
\end{equation}
under the assumption of asymptotic irrelevance, and in particular, we conjecture that the indicated limits exist.

\begin{center}
\begin{figure}[htb]
\caption{Plots of $\log P^s_n/\big(\log C_n+n\log 3s-\log3\big)$ for $s=1,3,6,12$.}\label{F:Lim13612}
\resizebox{120mm}{90mm}{\includegraphics{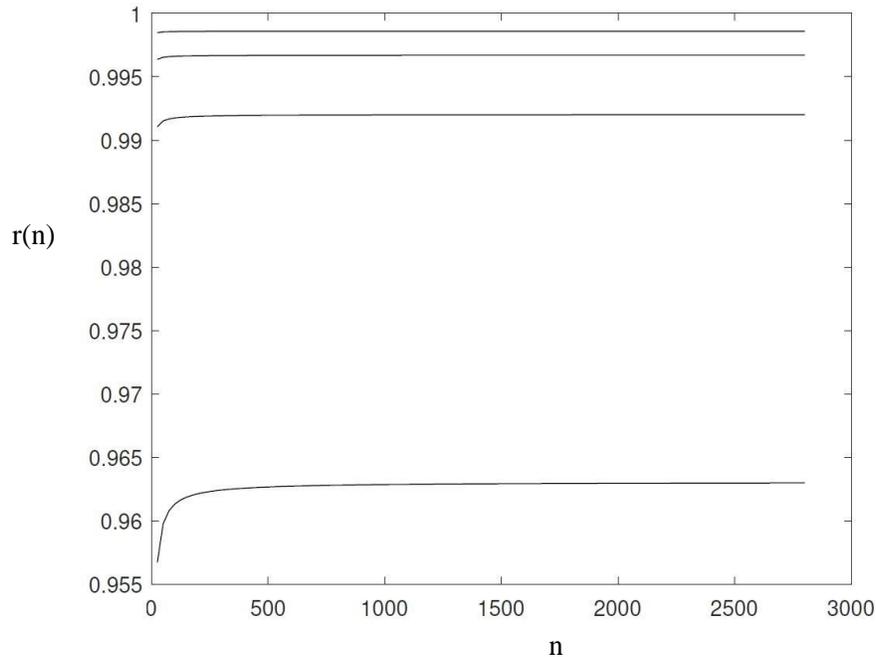}}
\end{figure}
\end{center}

As initial evidence, we offer Figure~\ref{F:Lim13612}, which displays plots of the quotients
\begin{equation}\label{E:PsnCn3s3}
\frac{\log P^s_n}{\log C_n+n\log 3s-\log3}
\end{equation}
for $s=1,3,6,12$, and $n\le 2800$ on the horizontal axis. The curves for the respective $s$-values appear in order from the bottom to the top. Based on this evidence, we conjecture that, for a given value of $s$, the sequence of terms \eqref{E:PsnCn3s3} increases monotonically with $n$. Now the denominator in \eqref{E:PsnCn3s3} is the logarithm of the upper bound from \eqref{eq:pcatupperlower}, the total number of words of length $n$, on $s$ generators, in the language of quasigroups. The numerator in \eqref{E:PsnCn3s3} is the logarithm of $P^s_n$, the total number of reduced words of length $n$ on $s$ generators. Thus the terms \eqref{E:PsnCn3s3} are bounded above by $1$, and under our conjecture of monotonicity, the inner limit in \eqref{E:Lim13612} would exist.

Now again, based on evidence as initially exhibited in Figure~\ref{F:Lim13612}, and then with greater detail as in \S\ref{SS:DepGeNum} below, we conjecture that the sequence of these inner limits increases with $s$. The upper bound of $1$ still applies, and thus the outer limit in \eqref{E:Lim13612} would exist. That this limit is $1$ is essentially the content of the conjecture of asymptotic irrelevance.

While Figure~\ref{F:Lim13612} is useful for exhibiting qualitative features, in particular the monotonicities, it is less useful for gaining quantitative insight. Thus it is worth mentioning a second numerical experiment, where the values of $\log P^{12}_n-\log C_n$, over intervals of $n$ up to $n=2800$, were analyzed in Matlab. Linear regressions of these values were obtained as $3.576n-1.102$ (with very tight error bars). The slope of the linear regression matches $\log 36\simeq 3.583$ to three significant digits, while the (negated) intercept matches $\log 3\simeq 1.099$ to the same accuracy.

\subsection{Dependence on the number of generators}\label{SS:DepGeNum}

We present approximations to the values of
\begin{equation}\label{E:PropOCan}
1-\lim_{n\to\infty}\frac{\log P^s_n}{\log C_n+n\log 3s-\log3}
\end{equation}
for $1\le s\le 100$ in Table~\ref{Tb:1minuslm}. The entries, which provide a numerical measure for the effect of cancelation by the quasigroup identities, are presented in the format ``$M$e-$N$" standing for $M\times10^{-N}$. The quotient
$$
\frac{\log P^s_{2000}}{\log C_{2000}+2000\log 3s-\log3}
$$
is used as a proxy for the limit. The function of $s$ represented by Table~\ref{Tb:1minuslm} was fitted to the rational function
$$
\frac{0.01929}{s-0.4811}
$$
by Matlab, with very tight 95\% confidence bounds. On this basis, noting that the logarithm of the golden ratio is $0.4812$, we formulate the following, which subsumes the unlabeled conjectures of \S\ref{SS:EfOAsIrr}.

\begin{conjecture}\label{Cj:DepGeNum}
We have the approximate values
$$
\lim_{n\to\infty}
\frac{\log P^s_n}{\log C_n+n\log 3s-\log3}
\simeq 1-\frac{0.019\dots}{s-\log\big((1+\sqrt5)/2\big)}
$$
for all generator counts $s$. In particular, the limit exists.
\end{conjecture}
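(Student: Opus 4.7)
The plan is to attack Conjecture~\ref{Cj:DepGeNum} in three stages: (i) existence of the inner limit $L(s) = \lim_{n\to\infty} f_s(n)$, where $f_s(n) = \log P^s_n / (\log C_n + n\log 3s - \log 3)$; (ii) existence and monotone increase of the assignment $s\mapsto L(s)$, together with its upper bound $L(s)\le 1$; and (iii) identification of the precise rational expression on the right hand side.

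For stage (i), I would combine the elementary upper bound $P^s_n \le 3^{n-1}s^n C_n$ from \eqref{eq:pcatupperlower} with a matching lower bound obtained from \eqref{E:pcatbound} and \eqref{E:PnkPkmnk}: each auxiliary bivariate $m^s(n-k,k)$ is bounded below by $P^s_{n-k}P^s_k - P^s_{n-2k}P^s_k$, since $m^s(n-2k,k)\le P^s_{n-2k}P^s_k$. Iterating this into the recursion \eqref{E:FormuPsn} should pin $f_s(n)$ inside a band whose width shrinks as $n\to\infty$. The monotonicity of $f_s(n)$ in $n$ visible in Figure~\ref{F:Lim13612} would then be derived from a supermultiplicativity estimate of the form $P^s_{m+n}\ge \alpha_s\, P^s_m P^s_n$: Fekete's lemma applied to $\log P^s_n - n\log 3s$ yields convergence, and rescaling by the denominator transfers monotonicity to $f_s$.

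For stage (ii), the bound $L(s)\le 1$ is immediate from \eqref{eq:pcatupperlower}. Monotonicity $L(s)\le L(s+1)$ should follow by embedding the free quasigroup on $s$ generators into that on $s+1$ generators and producing, by substituting the new letter into leaf positions of reduced parsing trees, strictly more reduced words than the ratio $(s{+}1)^n/s^n$ would demand; a comparison of cancelation patterns via Proposition~\ref{P:cancel_formats} shows that the addition of a generator only weakens root-vertex cancelations. Combined with the upper bound, existence of $\lim_{s\to\infty} L(s)$ follows, and the asymptotic irrelevance of quasigroup identities (Conjecture~\ref{Cj:AsIrQpId}) is exactly the assertion that this limit equals $1$.

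The main obstacle is stage (iii): extracting the closed form $1 - c/(s - \log\phi)$ with $c\approx 0.019$. Heuristically, the cancelation at a root vertex illustrated in Figure~\ref{F:RotVerCn} requires matching a reduced word $u$ of length $k$ against an initial subword of $v$, an event of density $O(s^{-k})$ in a large alphabet. Summing these rare events over $1\le k\le n-1$ with the Euclidean-algorithm depth multiplicities from \eqref{E:FormuPsn} should produce a bivariate generating series whose dominant singularity is controlled by the Fibonacci worst case \cite[\S4.5.3, Th.~F]{Knuth2}, located at $\phi^{-1}$; a transfer theorem would then translate this singularity into the shift $s - \log\phi$ in the denominator. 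The hard part is controlling the alternating signs $(-1)^{\epsilon^k_i + j^k_i}$ in \eqref{E:FormuPsn}: one must show that these cancelations do not produce destructive interference destroying the leading asymptotic, and then recover $c$ as an explicit convergent integral over the limiting joint distribution of $(k/n,L_k)$ as $n\to\infty$ under the uniform measure on partitions $n = (n-k)+k$. Absent a tractable two-variable generating function for the recursion, identification of the precise value $0.019\ldots$ appears to require a delicate saddle-point or probabilistic analysis that goes beyond the combinatorial tools developed here, which is why the statement is offered as a conjecture rather than a theorem.
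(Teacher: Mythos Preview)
The statement you are addressing is a \emph{conjecture}, and the paper does not prove it. The paper's entire support for Conjecture~\ref{Cj:DepGeNum} is numerical: the authors compute the quotients \eqref{E:PsnCn3s3} for $1\le s\le 100$ using $n=2000$ as a proxy for the limit (Table~\ref{Tb:1minuslm}), feed these values into Matlab's curve-fitting routine, obtain the rational function $0.01929/(s-0.4811)$, and then observe that $0.4811$ matches $\log\phi$ to four digits. There is no analytic argument at all, and the authors explicitly state that they have no interpretation for the numerator constant. So there is no ``paper's own proof'' against which to compare your proposal.

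Your outline is a reasonable research plan rather than a proof, and you correctly flag it as such in your final paragraph. That said, even stages (i) and (ii) contain genuine gaps that you should not gloss over. For stage~(i), the supermultiplicativity $P^s_{m+n}\ge\alpha_s P^s_m P^s_n$ that you want for Fekete's lemma is not obvious: concatenating two reduced words $u,v$ under a single connective $\mu^g$ may fail to be reduced precisely when the root-vertex cancelation of Proposition~\ref{P:cancel_formats} applies, and you would need to show that one of the three connectives always avoids cancelation (which is true, but requires an argument). Even granting this, Fekete gives convergence of $\tfrac{1}{n}\log P^s_n$, not monotonicity of $f_s(n)$ itself; the transfer you describe does not follow from the rescaling alone. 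For stage~(iii), your heuristic linking the Fibonacci worst case for the Euclidean algorithm to a singularity at $\phi^{-1}$ is suggestive, but the leap from ``worst-case depth $L_k$ is governed by Fibonacci inputs'' to ``the dominant singularity of a generating function sits at $s=\log\phi$'' is not justified: the relevant quantity is an \emph{average} over all $k$, not a worst case, and there is no generating function in the paper whose singularity structure has been identified. In short, your proposal goes well beyond what the paper establishes, and the conjecture remains open.
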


We do not currently have a conjectured interpretation for the numerator $0.019\dots$.

\renewcommand{\arraystretch}{1.2}
\begin{table}[hbt]
\caption{Decreasing impact of quasigroup identities as $s$ increases}\label{Tb:1minuslm}
\begin{tabular}{|c|c|c|c|}
\hline
\hline
$1\le s\le 25$ &$26\le s\le 50$ &$51\le s\le 75$ &$76\le s\le 100$\\ \hline\hline
370e-4
&   561e-6
 &  255e-6
  & 161e-6
\\
\hline
137e-4
&   536e-6
 &  250e-6
  & 159e-6
\\
\hline
800e-5
&   514e-6
 &  244e-6
  & 157e-6
\\
\hline
551e-5
&   493e-6
 &  239e-6
  & 154e-6
\\
\hline
415e-5
&   474e-6
 &  234e-6
  & 152e-6
\\
\hline
330e-5
&   456e-6
 &  229e-6
  & 150e-6
\\
\hline
272e-5
&   439e-6
 &  225e-6
  & 148e-6
\\
\hline
231e-5
&   424e-6
 &  220e-6
  & 146e-6
  \\
  \hline
200e-5
&   409e-6
 &  216e-6
  & 144e-6
  \\
  \hline
176e-5
&   396e-6
 &  212e-6
  & 142e-6
  \\
  \hline
157e-5
&   383e-6
 &  208e-6
  & 140e-6
  \\
  \hline
141e-5
&   371e-6
 &  204e-6
  & 138e-6
  \\
  \hline
128e-5
&   359e-6
 &  200e-6
  & 136e-6
  \\
  \hline
  117e-5
&   349e-6
 &  197e-6
  & 135e-6
  \\
  \hline
    108e-5
&   340e-6
 &  193e-6
  & 133e-6
  \\
  \hline
    997e-6
&   329e-6
 &  190e-6
  & 131e-6
  \\
  \hline
    928e-6
&   320e-6
 &  186e-6
  & 130e-6
  \\
  \hline
    867e-6
&   311e-6
 &  183e-6
  & 128e-6
  \\
  \hline
    813e-6
&   303e-6
 &  180e-6
  & 126e-6
  \\
  \hline
    765e-6
&   295e-6
 &  177e-6
  & 124e-6
  \\
  \hline
    722e-6
&   288e-6
 &  174e-6
  & 123e-6
  \\
  \hline
    683e-6
&   281e-6
 &  172e-6
  & 122e-6
  \\
  \hline
    648e-6
&   274e-6
 &  169e-6
  & 121e-6
  \\
  \hline
    616e-6
&   268e-6
 &  166e-6
  & 119e-6
  \\
  \hline
    587e-6
&   261e-6
 &  164e-6
  & 118e-6
  \\
  \hline
\end{tabular}
\end{table}
\renewcommand{\arraystretch}{1}

\newpage

\end{document}